\newcommand{\NN}{{\mathbb{N}}}
\newcommand{\bx} {\mathbf x}
\newcommand{\by} {\mathbf y}
\newcommand{\bz} {\mathbf z}
\newcommand{\cC}{{\mathcal{C}}}
\newcommand{\cH} {\mathcal H}
\newcommand{\cX} {\mathcal X}
\newcommand{\Aut}{{{\operatorname{Aut}}}}
\newcommand{\Alt}{{{\operatorname{Alt}}}}
\newcommand{\ex}{{{\operatorname{ex}}}}
\newcommand{\Gal}{{{\operatorname{Gal}}}}
\newcommand{\orb}{{{\operatorname{orb}}}}
\newcommand{\PGL}{{{\operatorname{PGL}}}}
\newcommand{\PGGL}{{{\operatorname{P}\Gamma \operatorname{L}}}}
\newcommand{\PSL}{{{\operatorname{PSL}}}}
\newcommand{\supp}{{{\operatorname{supp}}}}
\newcommand{\Sym}{{{\operatorname{Sym}}}}
\newcommand{\GL}{\operatorname{GL}}
\newtheorem{thm}{Theorem}[section]
\newtheorem{lem}[thm]{Lemma}
\newtheorem{cor}[thm]{Corollary}
\newtheorem{prop}[thm]{Proposition}
\theoremstyle{definition}
\newtheorem{rem}[thm]{Remark}
\newtheorem{defn}[thm]{Definition}
\newtheorem{exmp}[thm]{Example}
\begin{document}

\title{Tur\'{a}n numbers and switching}


\author{Karen Gunderson}
\address{Department of Mathematics, University of Manitoba}
\email{karen.gunderson@umanitoba.ca}

\author{Jason Semeraro}
\address{Heilbronn Institute for Mathematical Research, Department of
  Mathematics, University of Leicester, United Kingdom}
\email{jpgs1@leicester.ac.uk}

\thanks{The first author gratefully acknowledges funding from NSERC and the second author gratefully acknowledges financial support from the Heilbronn Institute}

\begin{abstract}
Using a switching operation on tournaments we obtain some new lower bounds on the Tur\'{a}n number of the $r$-graph on $r+1$ vertices with $3$ edges. For $r=4$, extremal examples were constructed using Paley tournaments in previous work. We show that these examples are unique (in a particular sense) using Fourier analysis. 

A $3$-tournament is a `higher order' version of a tournament given by an alternating function on triples of distinct vertices in a vertex set.  We show that $3$-tournaments also enjoy a switching operation and use this to give a formula for the size of a switching class in terms of level permutations, generalising a result of Babai--Cameron.
\end{abstract}

\keywords{Tur\'{a}n numbers, tournaments, switching}

\subjclass[2010]{05C65; secondary: 05C35}

\date{\today}

\maketitle

\section{Introduction}\label{s:intro}
Building on the results of \cite{GS17}, we wish to promote the use of tournament switching as a means of constructing hypergraphs with extremal properties. Let $r\geq 2$, $\cH$ be an $r$-uniform hypergraph, or $r$-graph, and $n$ be an integer. The \emph{Tur\'{a}n number} for $\mathcal{H}$, denoted $\operatorname{ex}(\mathcal{H}, n)$, is the maximum number of hyperedges in any $r$-uniform hypergraph on $n$ vertices containing no copy of $\mathcal{H}$. The \emph{Tur\'{a}n density} of $\mathcal{H}$ is $\pi(\mathcal{H}) = \lim_{n \to \infty} \operatorname{ex}(\mathcal{H}, n) \binom{n}{r}^{-1}$. A  well-known averaging argument shows that this limit always exists. 

For $r \ge 2$ we let $\cH(r)$ denote the (unique) $r$-graph on $r+1$ vertices with three edges. Since $\cH(2)$ is just a triangle, we refer to $\cH(r)$ as an \textit{$r$-triangle}. By considering certain arrangements of points on a sphere, it can be shown that $\displaystyle\frac{1}{2^{r-1}} \le \pi(\cH(r))$  (see  \cite[Example 3]{FF84}). It is conjectured that $\pi(\cH(3))=\frac{2}{7}$ (see \cite[Theorem 3]{FF84}) and proven that $\pi(\cH(4))=\frac{1}{4}$ (see Theorem \ref{t:r=4}), but to the best of our knowledge $\frac{1}{2^{r-1}}$ is the best known lower bound for $\cH(r)$ when $r \ge 5$. By developing the theory of tournament switching, we improve upon this lower bound in three cases. Recall that a $t-(n,k,\lambda)$ design is a $k$-graph on $n$ vertices in which every any set of $t$ vertices is contained in exactly $\lambda$ edges.

\begin{thm}\label{t:main}
We have,
$$\frac{9}{64} \le \pi(\cH(6)), \hspace{2mm} \frac{35}{2^{11}} \le \pi(\cH(7)), \hspace{2mm} \mbox{ and } \hspace{2mm} \frac{315}{2^{14}} \le \pi(\cH(8)).$$ Moreover there exists a $5-(12,6,2)$ design with the property that any set of $7$ vertices spans $0$ or $2$ edges and consequently $\ex(\cH(6),12)=264$.
\end{thm}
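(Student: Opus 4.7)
The plan is to treat the three assertions of the theorem in turn: the asymptotic lower bounds on $\pi(\cH(r))$ for $r \in \{6,7,8\}$, the existence of the specified $5$-$(12,6,2)$ design, and the evaluation $\ex(\cH(6),12) = 264$. The first relies on the tournament switching framework developed earlier in the paper; the remaining two admit direct combinatorial arguments.

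For the lower bounds, I would associate to each tournament $T$ on a vertex set $V$ an $r$-graph $\mathcal{G}_r(T)$ by declaring an $r$-subset $S \subseteq V$ to be an edge precisely when $T|_S$ satisfies a rule $\mathcal{R}_r$ depending only on the switching class of $T|_S$. The rule $\mathcal{R}_r$ must meet two competing requirements. First, $\mathcal{G}_r(T)$ must be $\cH(r)$-free: for every $(r+1)$-vertex tournament $T'$, at most two of the $r+1$ sub-tournaments obtained by deleting one vertex of $T'$ may satisfy $\mathcal{R}_r$, which rules out all possible copies of $\cH(r)$. Second, the density of $r$-subsets $S$ with $T|_S$ satisfying $\mathcal{R}_r$ must tend, along a suitable sequence of tournaments on $n \to \infty$ vertices (for example Paley tournaments over $\FF_q$ with $q \equiv 3 \pmod 4$), to the claimed value $c_r \in \{9/64,\, 35/2^{11},\, 315/2^{14}\}$. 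The specific numerators suggest that $\mathcal{R}_r$ singles out a small union of switching classes of $r$-vertex tournaments, and the $\cH(r)$-free check then reduces to an enumeration over switching classes on $r+1$ vertices.

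For the design, I would exhibit an explicit $5$-$(12,6,2)$ design with the stated property (for instance by combining two disjoint copies of the Steiner system $S(5,6,12)$ under a suitable permutation outside its $M_{12}$-stabiliser, or via a tournament-based construction fitting the framework of the previous paragraph). A double count yields
$$\sum_{|W|=7}|\{B \in \mathcal{B} : B \subseteq W\}| \;=\; (12-6)\cdot 264 \;=\; 2\binom{12}{7},$$
so the average number of blocks contained in a $7$-subset is $2$. Hence the condition ``$0$ or $2$ blocks per $7$-subset'' is equivalent to the uniform condition ``exactly $2$''. Since $M_{12}$ is transitive on $5$-subsets (by sharp $5$-transitivity) and therefore, via complementation, on $7$-subsets, this uniform condition can be verified by checking a single representative $7$-subset.

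For $\ex(\cH(6),12) = 264$, the lower bound is immediate from the design. For the upper bound, let $G$ be any $\cH(6)$-free $6$-graph on $12$ vertices. Since three distinct $6$-edges contained in a common $7$-set $W$ must omit three distinct vertices of $W$ and therefore form a copy of $\cH(6)$, every $7$-subset contains at most two edges of $G$. Double-counting pairs $(W, e)$ with $|W|=7$ and $e \in E(G)$, $e \subseteq W$ yields
$$6\,|E(G)| \;=\; \sum_{|W|=7}|\{e \in E(G):e \subseteq W\}| \;\le\; 2\binom{12}{7} \;=\; 1584,$$
so $|E(G)| \le 264$. The principal obstacle is expected to be the first step: for each $r \in \{6,7,8\}$, identifying a switching-invariant rule $\mathcal{R}_r$ that is simultaneously restrictive enough to preclude $\cH(r)$ and permissive enough to realise the prescribed density demands careful case analysis over the switching classes on $r$ and $r+1$ vertices.
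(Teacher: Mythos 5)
Your overall strategy is the right one and matches the paper's: the lower bounds come from a switching-invariant rule on $r$-subsets (in the paper's language, a \emph{special} oriented two-graph $g$, meaning every oriented two-graph on $r+1$ vertices has at most two induced copies of $g$), and your upper-bound argument for $\ex(\cH(6),12)\le 264$ (any three $6$-edges in a common $7$-set form an $\cH(6)$, then double count) is correct and is exactly the $n=12$, $r=6$ case of Proposition~\ref{p:decaen}. Likewise your observation that, for a $264$-block design, ``$0$ or $2$ blocks per $7$-set'' is equivalent to ``exactly $2$'' is a correct and harmless reformulation.

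However, there is a genuine gap, and it is precisely where the content of the theorem lives: you never exhibit the rules $\mathcal{R}_6,\mathcal{R}_7,\mathcal{R}_8$, i.e.\ the special switching classes, and you explicitly defer this step as ``the principal obstacle.'' The numerical bounds $9/64$, $35/2^{11}$, $315/2^{14}$ are exactly $r!/(|\Aut(g)|\,2^{\binom{r-1}{2}})$ for special two-graphs with $|\Aut(g)|=5,9,1$ respectively, and establishing that such two-graphs exist (the paper does this by exhaustive SAGE search over switching classes, using Lemma~\ref{l:counting-copies} to test specialness and Lemma~\ref{l:cosets2} to compute $|\Aut(g)|$) is unavoidable; without it nothing is proved. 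Two further points. First, you propose to extract the density from Paley tournaments; the paper instead averages over a \emph{random} tournament (Theorem~\ref{t:rand}), which yields the density $r!/(|\Aut(g)|\,2^{\binom{r-1}{2}})$ for free, whereas computing the exact count of special $r$-subsets in a Paley tournament is an extra, harder calculation that you do not carry out. Second, your design construction is only a suggestion, and your proposed verification is flawed: the union of two block-disjoint copies of $S(5,6,12)$ is not invariant under $M_{12}$, so sharp $5$-transitivity of $M_{12}$ does not reduce the ``exactly $2$ blocks per $7$-set'' check to a single representative. (That construction can be salvaged --- the block set of $S(5,6,12)$ is closed under complementation, so each $7$-set contains exactly one block of each copy --- but one must still prove that two mutually block-disjoint copies exist, which you do not.) The paper instead takes the $6$-graph $\{A\subset V(g_{11}) : g_{11}|_A\cong g\}$ built from the Paley two-graph on $12$ points, which is $\cH(6)$-free by specialness of $g$ and meets the bound of Proposition~\ref{p:decaen}.
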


Two tournaments on the same vertex set  $V$ are \textit{switching equivalent} if one can be obtained from the other by reversing the orientations of all edges incident to some subset of $V$. Any pair of switching equivalent tournaments determine the same \textit{oriented two-graph} (see Lemma \ref{l:onetoone}). 

To prove Theorem \ref{t:main}, we count the number of subtournaments in a random tournament which are switching equivalent to a given tournament $T$ on $r$ vertices (see Theorem \ref{t:rand}). This generalises the approach taken in \cite{GS17} to prove that $\pi(\cH(4))=\frac{1}{4}$ using an augmented Paley tournament (see \cite[Figure 1]{GS17}). For small $r$, and provided the oriented two-graph associated to $T$ is \textit{special} (see Definition \ref{d:special}), we can use $T$ to bound $\pi(\cH(r))$ in terms of the automorphism group of its switching class (see Proposition \ref{p:lb}). Using methods from Fourier analysis, in Section \ref{s:uniq}, we also prove that these examples are unique among all those that arise from \textit{admissible functions}, of which the square character on $\mathbb{F}_q^\times$ supplies an example. 

The paper concludes  by initiating the investigation of a switching operation for \textit{$3$-tournaments}, introduced by Leader and Tan \cite{LT10}. After introducing the operation we use it to count the number of isomorphism classes of elements in a switching class in terms of its automorphism group. 
One hope is that, by analogy with the case $d=2$, $d$-tournaments might provide a rich supply of extremal examples for Tur\'{a}n-type problems.

The problem of determining Tur\'{a}n numbers for hypergraphs related to the $r$-triangles has connections to a number of other extremal problems for hypergraphs.  In their two papers, Brown, Erd\H{o}s, and S\'{o}s~\cite{BES73a, BES73b} considered the parameter $f^{(r)}(n;k, s)$ (elsewhere called $m(n, r, k,s)$), the smallest $m$ so that every $r$-uniform hypegraph with $n$ vertices and $m$ edges has some set of $k$ vertices with at least $s$ edges.  Since there is a unique $r$-uniform hypergraph on $r+1$ vertices with any given number of edges, $\ex(n, \mathcal{H}(r)) = f^{(r)}(n; r+1, 3) - 1$.  Bollob\'{a}s, Leader, and Malvenuto~\cite{BLM11} studied Tur\'{a}n problems for $(s, t)$-daisies, an $r$-uniform hypergraph on $r-t+s$ vertices consisting of all hyperedges containing a fixed $(r-t)$-set.  The $r$-triangle $\mathcal{H}(r)$ is precisely a $\mathcal{D}_r(3, 2)$ daisy.  Reiher, R\"{o}dl and Schacht~\cite{RRS17} determine the Tur\'{a}n density with an additional `uniform density' condition for the hypergraphs $\mathcal{H}(r)$.  Balogh, Clemen, and Lidick\'{y}~\cite{BCL21a} introduced the notion of the extremal co-degree in the $\ell_2$-norm and showed~\cite{BCL21b} that the extremal hypergraphs constructed in~\cite{GS17} are also extremal for the co-degree in the square norm for $\mathcal{H}(4)$-free graphs.

\subsection{Acknowledgements}
This work has taken several years to write up and was mostly completed in 2017 at the University of Bristol while the second author was still a Heilbronn Fellow. Over the years that followed, a number of people have provided input on the project. 
Thanks are also extended to Andy Booker, Tom Oliver, Rob Kurinczuk, Dan Fretwell and Adam Thomas for alerting the second author to the number-theoretic results in Section \ref{s:uniq} on which the proof of Theorem \ref{t:uniq} is based.

\section{Tournaments, two-graphs, switching and automorphisms}\label{s:methods}
\subsection{Tournaments and switching}
Let $T$ be a tournament on a vertex set $V$ regarded as an antisymmetric function $$e_T: \{(x,y) \in V \times V \mid x \neq y \} \rightarrow \{ \pm 1\}, \hspace{2mm} e_T(x,y)=\begin{cases} +1 & \mbox{ if $(x,y) \in E(T)$ } \\ -1 & \mbox{otherwise.}\end{cases}$$ Now define a new function $g_T: \{(x,y,z) \in V \times V \times V \mid \mbox{$x,y,z$ distinct}\} \rightarrow \{\pm 1\}$ via $$g_T(x,y,z):= e_T(x,y)e_T(y,z)e_T(z,x).$$

\begin{defn}[\cite{BC2000},\cite{C77}]\label{d:orienttwo}
Let $V$ be a vertex set. An \textit{oriented two-graph} on $V$ is an alternating function (in the sense that interchanging two arguments changes
the sign) $g: \{(x,y,z) \in V \times V \times V \mid \mbox{$x,y,z$ distinct}\} \rightarrow \{\pm 1\}$ for which \begin{equation}\label{e:eq1}
g(x,y,z)g(y,x,w)g(z,y,w)g(x,z,w)= +1
\end{equation} 
for any four distinct vertices $x,y,x,w$.
\end{defn}

\begin{lem}
For any tournament $T$, $g_T$ is an oriented two-graph.
\end{lem}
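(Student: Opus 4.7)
The plan is to verify the two defining properties of an oriented two-graph directly from the definitions of $e_T$ and $g_T$, using only the antisymmetry $e_T(x,y) = -e_T(y,x)$ and the fact that $e_T$ takes values in $\{\pm 1\}$ (so $e_T(x,y)^2 = 1$).

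First I would check that $g_T$ is alternating. Since the symmetric group on three letters is generated by two adjacent transpositions, it suffices to verify that swapping two arguments introduces a sign. For instance, swapping the first two,
\[
g_T(y,x,z) = e_T(y,x)\,e_T(x,z)\,e_T(z,y) = (-e_T(x,y))(-e_T(z,x))(-e_T(y,z)) = -g_T(x,y,z),
\]
using antisymmetry three times. The swap of the last two arguments is analogous, and these together generate all of $\Sym(3)$.

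Second, I would verify the cocycle identity \eqref{e:eq1}. This is the only step with any content, and it is purely mechanical: expand each of the four factors $g_T(x,y,z), g_T(y,x,w), g_T(z,y,w), g_T(x,z,w)$ as a product of three values of $e_T$, producing a total of twelve factors. The key observation is that these twelve factors pair up perfectly into six pairs of the form $e_T(a,b)e_T(b,a)$, one pair for each of the six ordered-but-unordered pairs drawn from $\{x,y,z,w\}$. Each such pair equals $-1$ by antisymmetry, so the product is $(-1)^6 = +1$.

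I expect no serious obstacle: the main thing to watch is the bookkeeping in the cocycle computation, where it is easy to miscopy indices. Writing out the twelve factors explicitly and then striking through the paired ones makes the sign count transparent.
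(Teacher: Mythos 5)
Your proof is correct: the paper states this lemma without proof, and the direct verification you give (alternation from three applications of antisymmetry, and the cocycle identity \eqref{e:eq1} from pairing the twelve factors into six products $e_T(a,b)e_T(b,a) = -1$, one for each unordered pair in $\{x,y,z,w\}$) is exactly the routine check the authors leave to the reader. Both computations as you describe them go through without issue.
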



The concept of \textit{switching} is used to explain how different tournaments with the same oriented two graph are related:

\begin{defn}[\cite{M95}]\label{def:switch}
Let $V$ be a vertex set.
\begin{enumerate}
\item If $T$ is a tournament on on $V$ and $X \subseteq V$ then \emph{$T$ switched with respect to $X$}, denoted $T^X$ is the tournament obtained from $T$ by reversing the orientations of all edges between $X$ and $V \setminus X$.
\item Two tournaments $T_1$ and $T_2$, both on vertex set $V$, are \emph{switching equivalent} if there exists $X \subseteq V$ so that $T_2$ is precisely $T_1$ switched with respect to $X$. 
\end{enumerate}
\end{defn}
It is straightforward to verify the following:
\begin{lem}
Switching equivalence is an equivalence relation on the set of tournaments on $V$.
\end{lem}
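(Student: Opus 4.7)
The plan is to verify the three equivalence axioms directly from Definition \ref{def:switch}, with the main tool being the observation that switching is parameterised by subsets of $V$ and behaves like a group action of $(\mathcal{P}(V), \triangle)$ on the set of tournaments on $V$, where $\triangle$ denotes symmetric difference.

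First, for reflexivity, I would note that $T^{\emptyset} = T$: there are no edges between $\emptyset$ and $V$ to reverse, so every tournament is switching equivalent to itself. Next, for symmetry, the key point is that $X$-switching is an involution. Indeed, for any $X \subseteq V$ every edge $\{u,v\}$ between $X$ and $V\setminus X$ is reversed exactly twice when we apply the $X$-switch to $T$ and then again to $T^X$, while all other edges are untouched; hence $(T^X)^X = T$. Therefore if $T_2 = T_1^X$ then $T_1 = T_2^X$, giving symmetry.

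For transitivity, suppose $T_2 = T_1^X$ and $T_3 = T_2^Y$. I would show $T_3 = T_1^{X \triangle Y}$ by a short parity calculation: using the indicator functions $\chi_X, \chi_Y \colon V \to \{0,1\}$, an edge $\{u,v\}$ is reversed by the $X$-switch precisely when $\chi_X(u) + \chi_X(v) \equiv 1 \pmod 2$, and similarly for the $Y$-switch. Composing the two switches, the edge $\{u,v\}$ is reversed overall iff
\[
(\chi_X(u)+\chi_X(v)) + (\chi_Y(u)+\chi_Y(v)) \equiv 1 \pmod 2,
\]
which rearranges to $\chi_{X \triangle Y}(u) + \chi_{X \triangle Y}(v) \equiv 1 \pmod 2$. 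Thus $(T_1^X)^Y = T_1^{X \triangle Y}$, and $T_1$ is switching equivalent to $T_3$.

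There is no genuine obstacle here; the lemma is essentially a restatement of the fact that the assignment $X \mapsto (T \mapsto T^X)$ defines an action of the elementary abelian $2$-group $(\mathcal{P}(V), \triangle)$ on the set of tournaments on $V$, and orbits of group actions are always equivalence classes. The only thing worth being careful about is to spell out the parity computation for transitivity, since reflexivity and symmetry follow immediately from $T^{\emptyset} = T$ and $(T^X)^X = T$ respectively.
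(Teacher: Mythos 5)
Your proof is correct: the paper offers no proof of this lemma (it is introduced only with the remark that it is ``straightforward to verify''), and your verification via $T^{\emptyset}=T$, $(T^X)^X=T$, and the identity $(T^X)^Y=T^{X\,\triangle\,Y}$ is exactly the routine argument the authors intend, with the parity computation for transitivity correctly carried out. The observation that switching is an action of $(\mathcal{P}(V),\triangle)$ is a clean way to package all three axioms at once and is consistent with Remark~\ref{rem:ct-equiv-classes}, where the kernel $\{\emptyset,V\}$ of this action accounts for the count of $2^{|V|-1}$ tournaments per switching class.
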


We refer to an equivalence class of tournaments for the switching operation as a \textit{switching class} of tournaments (on $V$).

\begin{defn}\label{d:aug}
Let $T$ be a tournament. The \textit{augmentation of $T$} is the tournament $T^+$ with $V(T^+)= V(T) \cup\{\infty\}$, whose restriction to $V(T)$ is $T$ and with $(x,\infty) \in E(T^+)$ for all $x \in V(T)$. If a tournament is of the form $T^+$ for some $T$, it is said to be \textit{augmented with respect to $\infty$}.
\end{defn}

Note that the vertex labelled $\infty$ in Definition~\ref{d:aug} is an arbitrary new vertex.  The reason for thinking of this as a `vertex at infinity' is connected to the construction of the Paley oriented two-graphs in Section \ref{sec:paley} using projective lines.

\begin{lem}\label{l:onetoone}
Let $V$ be a finite vertex set. There is a one to one correspondence between switching classes of tournaments on $V$ and oriented two-graphs on $V$.
\end{lem}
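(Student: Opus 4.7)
The map in question is the switching class of a tournament $T$ goes to the oriented two-graph $g_T$ defined before Definition~\ref{d:orienttwo}. My plan is to check well-definedness (switching-invariance), then construct an explicit inverse.

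First I would verify that $g_T$ depends only on the switching class of $T$. Suppose $T' = T^X$. For distinct $x,y,z \in V$, exactly $0$ or $2$ of the three edges of the triangle on $\{x,y,z\}$ cross between $X$ and $V\setminus X$, since the indicator $\mathbf 1_X$ takes each value an even number of times around any closed walk of length three. Hence $e_{T'}(x,y)e_{T'}(y,z)e_{T'}(z,x)$ differs from $e_T(x,y)e_T(y,z)e_T(z,x)$ by an even number of sign changes, i.e.\ $g_{T'}=g_T$. This yields a well-defined map from switching classes to oriented two-graphs.

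For injectivity, suppose $g_T = g_{T'}$ and fix any $v_0 \in V$. Define
\[
X := \{ x \in V\setminus\{v_0\} : e_T(v_0,x) \ne e_{T'}(v_0,x) \}.
\]
For any $y,z\in V\setminus\{v_0\}$ distinct, equating $g_T(v_0,y,z) = g_{T'}(v_0,y,z)$ and using $e(z,v_0) = -e(v_0,z)$ gives $e_T(y,z) = \epsilon(y)\epsilon(z)\, e_{T'}(y,z)$, where $\epsilon(x) := e_T(v_0,x)e_{T'}(v_0,x)$. A quick case check on whether $y,z$ lie in $X$ shows that the edges flipped between $T$ and $T'$ are exactly those between $X$ and $V\setminus X$, so $T' = T^X$.

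For surjectivity, given an oriented two-graph $g$, fix $v_0$ and define a tournament $T$ by declaring $(v_0,x)\in E(T)$ for all $x\ne v_0$ and, for distinct $y,z\in V\setminus\{v_0\}$, setting $e_T(y,z) := -g(v_0,y,z)$. Antisymmetry of $e_T$ follows from the alternating property of $g$. The main verification is that $g_T = g$ also on triples $\{x,y,z\}$ not containing $v_0$; this is where the cocycle identity (\ref{e:eq1}) enters, and this is the step I expect to be the most delicate. Taking $w = v_0$ in (\ref{e:eq1}) and using the alternating relations $g(y,x,v_0) = -g(v_0,x,y)$, $g(z,y,v_0) = -g(v_0,y,z)$, $g(x,z,v_0) = -g(v_0,z,x)$, one obtains
\[
g(x,y,z) \;=\; -\,g(v_0,x,y)\,g(v_0,y,z)\,g(v_0,z,x),
\]
and the right hand side is exactly $e_T(x,y)e_T(y,z)e_T(z,x) = g_T(x,y,z)$ by the definition of $T$. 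Hence $g_T = g$, completing the proof. The two constructions are visibly inverse to each other up to the choice of $v_0$, and changing $v_0$ alters $T$ only by switching, so the bijection is canonical on switching classes.
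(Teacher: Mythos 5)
Your proof is correct and follows essentially the same route as the paper: show $g_T$ is invariant under switching, then recover a tournament from $g$ by fixing a base vertex and reading off the remaining edges from $g$ evaluated at that vertex. You additionally write out the injectivity argument and the cocycle-identity verification that the paper dismisses as clear, which is a welcome completion rather than a departure.
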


\begin{proof}
If $T$ is a tournament on $V$ then switching with respect to $X \subset V$ reverses the orientations of evenly many of the edges between a triple of vertices $x,y,z$. Consequently $$g_T(x,y,z)=e_T(x,y)e_T(y,z)e_T(z,x)=e_{T^X}(x,y)e_{T^X}(y,z)e_{T^X}(z,x)=g_{T^X}(x,y,z).$$ On the other hand, if $g$ is an oriented two-graph, the augmented tournament given by fixing an arbitrary  vertex to label $\infty$ and setting $$e_T(x, \infty):=1 \mbox{ and } e_T(x,y):=g(\infty,y,x), \mbox{ for $x,y \neq \infty$ }$$ clearly satisfies $g_T=g$. 
\end{proof}

\begin{rem}\label{rem:ct-equiv-classes} Since switching with respect to $X \subseteq V$ and its complement are equivalent, there are always $2^{|V|-1}$ tournaments in a switching class associated to an oriented two-graph. 
\end{rem}

The following definition will be useful:

\begin{defn} If $g$ is an oriented two-graph on $V$ and $W \subseteq V$,  write $g|_W$ for the oriented two-graph obtained by restricting $g$ to $W$ and call $g|_W$ the \textit{restriction of $g$ to $W$}. 
\end{defn}

The argument in Lemma \ref{l:onetoone} shows that every tournament is switching equivalent to an augmented tournament. In fact, more is true:

\begin{lem}\label{l:univ}
If $T$ is a tournament with vertex set $V$ then for all $0 \leq i \leq |V|-1$ and $v \in V$ there exists $X \subseteq V$ such that $v$ has in-degree $i$ in $T^X$.
\end{lem}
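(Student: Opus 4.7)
The plan is to use the key observation that switching with respect to a set $X$ only flips edges between $X$ and $V \setminus X$, so the effect on a single vertex $v$ is completely controlled by how the edges incident to $v$ are split across this partition. By Remark~\ref{rem:ct-equiv-classes}, the tournaments $T^X$ and $T^{V\setminus X}$ agree, so I may restrict attention to subsets $X \subseteq V$ with $v \notin X$. For such an $X$, the edges incident to $v$ that get reversed are precisely those joining $v$ to vertices of $X$, while all other edges at $v$ stay the same.

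Concretely, first I would set $n = |V|$, let $d$ be the in-degree of $v$ in $T$, and write $N^- = \{u \in V : (u,v) \in E(T)\}$ and $N^+ = \{u \in V : (v,u) \in E(T)\}$, so $|N^-| = d$ and $|N^+| = n-1-d$. For $X \subseteq V \setminus \{v\}$, each vertex of $X \cap N^-$ stops contributing to the in-degree of $v$ in $T^X$ while each vertex of $X \cap N^+$ starts contributing. Hence
\[
d^-_{T^X}(v) \;=\; d \;-\; |X \cap N^-| \;+\; |X \cap N^+|.
\]

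The second step is to realise the target in-degree $i$ by choosing $X$ appropriately. If $i \le d$, take $X \subseteq N^-$ with $|X| = d - i$ and obtain $d^-_{T^X}(v) = d - (d-i) = i$; this is possible because $0 \le d - i \le d = |N^-|$. If $i > d$, take $X \subseteq N^+$ with $|X| = i - d$ and obtain $d^-_{T^X}(v) = d + (i-d) = i$; this is possible because $0 < i - d \le n-1-d = |N^+|$. In either case $X$ exists, which proves the claim.

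There is no real obstacle here; the only subtle point is the initial reduction to $v \notin X$, which both halves the case analysis and makes the combinatorial accounting transparent. After that the argument is purely a matter of choosing $X$ inside $N^-$ or $N^+$ according to the sign of $i - d$.
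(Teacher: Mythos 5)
Your proof is correct: the formula $d^-_{T^X}(v) = d - |X \cap N^-| + |X \cap N^+|$ for $v \notin X$ is right, and the two cases $i \le d$ and $i > d$ cover all $0 \le i \le |V|-1$ with valid choices of $X$ inside $N^-$ or $N^+$ respectively. The paper states Lemma~\ref{l:univ} without proof, and your argument is exactly the natural one the authors evidently had in mind; the appeal to Remark~\ref{rem:ct-equiv-classes} is not even needed, since your constructed sets already avoid $v$.
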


As a consequence we obtain:

\begin{cor}
Suppose that $|V| > 2$. Then there exist at least two isomorphism classes of tournaments in a switching class on $V$.
\end{cor}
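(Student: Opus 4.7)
The plan is to argue by contradiction using Lemma~\ref{l:univ}. Suppose every tournament in a switching class $[T]$ on $V$ is isomorphic to $T$; I derive a contradiction when $n := |V| > 2$.

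First, isomorphic tournaments share a common in-degree multiset $M$ of size $n$. Fix $v \in V$. By Lemma~\ref{l:univ}, for each $k \in \{0, 1, \ldots, n-1\}$ some member of $[T]$ has $v$ of in-degree $k$; since the multiset of in-degrees is common across $[T]$, each such $k$ lies in $M$. Hence $M \supseteq \{0, 1, \ldots, n-1\}$, and since $|M| = n$, the inclusion is an equality and each value appears with multiplicity one. A tournament with this in-degree sequence is necessarily transitive (the unique vertex of each in-degree determines a strict total order), so every member of $[T]$ is transitive.

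Next, choose any transitive $T_0 \in [T]$ and label its vertices $v_0, v_1, \ldots, v_{n-1}$ with $v_i$ of in-degree $i$, so that $v_i \to v_j$ in $T_0$ precisely when $i < j$. As $n > 2$, the vertex $v_1$ is neither source nor sink. Set $T_1 := T_0^{\{v_1\}}$. Reversing all edges at $v_1$ gives the following in-degrees in $T_1$: $v_0$ has in-degree $1$ (gaining $v_1$ as in-neighbour), $v_1$ has in-degree $n-2$ (losing $v_0$ and gaining $v_2, \ldots, v_{n-1}$), and each $v_i$ for $i \ge 2$ has in-degree $i-1$ (losing $v_1$). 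All of these are at least $1$, so $T_1$ has no source and hence is not transitive, contradicting the conclusion of the first step.

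The main obstacle is the first step: the pinching of the common degree multiset to $\{0, 1, \ldots, n-1\}$ is what makes the argument go through, and it depends in an essential way on Lemma~\ref{l:univ}. Once every member of $[T]$ is known to be transitive, the second step is a short edge-reversal bookkeeping designed so that the absence of a vertex of in-degree zero in $T_1$ is manifest.
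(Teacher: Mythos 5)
Your proof is correct and follows essentially the same route as the paper's: both use Lemma~\ref{l:univ} to force the common in-degree multiset to be $\{0,1,\ldots,|V|-1\}$, deduce transitivity, and then exhibit a switch whose in-degree sequence violates this. The only (immaterial) difference is the choice of switching set --- you switch at the singleton $\{v_1\}$ and observe the resulting tournament has no source, while the paper switches at $\{v_0,v_2\}$ and observes a repeated in-degree.
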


\begin{proof}
If $T$ is a tournament with the property that $T^X \cong T$ for each $X \subseteq V$ then by Lemma \ref{l:univ}, the in-degree sequence of $T$ is $\{0,1,2,\ldots,|V|-1\}$. This implies that $T$ is a transitive tournament on $V$. Denote by $v_i$ the vertex of in-degree $i$. If $X:= \{v_0,v_2\}$ then the in-degrees of $v_0$ and $v_2$ in $T^X$ are both $|V|-2$, a contradiction.
\end{proof}

The precise number of tournaments in a switching class is given in Remark \ref{rem:numberinsw} below.

\subsection{Automorphisms of tournaments and switching classes}
The goal of this subsection is Theorem \ref{t:rand} which counts the expected number of subtournaments of a given tournament which lie in a given switching class.

\begin{defn}
If $g$ is an oriented two-graph on $V$ with switching class  $C$ define $$\Aut(g)=\Aut(C)=\{\sigma \in \Sym(V) \mid g(\sigma(x),\sigma(y), \sigma(z))=g(x,y,z), \mbox{ distinct } x,y,z \in V\}.$$ 
\end{defn}

Plainly $\Aut(T_1) \le \Aut(g_{T_1})=\Aut(C)$ for any tournament $T_1$ in a switching class $C$. In fact we have the following result:

\begin{lem}\label{l:cosets}
Let $C$ be a switching class of tournaments on $V$ and fix a tournament $T_1 \in C$. There is a one to one correspondence:

$$\begin{array}{rcl} \displaystyle\Phi:\Aut(C)/\Aut(T_1) &\rightarrow &\{T \in C \mid T \cong T_1\}\vspace{2mm} \\ 
\sigma\Aut(T_1) &\mapsto & \sigma(T_1).\end{array}$$
\end{lem}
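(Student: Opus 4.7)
The plan is to directly verify the four standard items for the claimed bijection: $\Phi$ is well-defined on cosets, lands in the target set, is injective, and is surjective. All four reduce to a single key observation, which I would establish first.

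The observation is that for a permutation $\sigma \in \Sym(V)$, one has $\sigma \in \Aut(C)$ if and only if $g_{\sigma(T_1)} = g_{T_1}$, equivalently if and only if $\sigma(T_1) \in C$. To see this, I would unwind the definitions: $e_{\sigma(T_1)}(x,y) = e_{T_1}(\sigma^{-1}(x), \sigma^{-1}(y))$, so $g_{\sigma(T_1)}(x,y,z) = g_{T_1}(\sigma^{-1}(x), \sigma^{-1}(y), \sigma^{-1}(z))$, and the condition $\sigma \in \Aut(g_{T_1}) = \Aut(C)$ is exactly that this equals $g_{T_1}(x,y,z)$ for all distinct triples. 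Together with Lemma \ref{l:onetoone}, which identifies the switching class $C$ with the level set of $g_{T_1}$, this gives the stated equivalence.

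With this in hand, the four verifications are short. For well-definedness, if $\tau = \sigma\alpha$ with $\alpha \in \Aut(T_1)$, then $\tau(T_1) = \sigma(\alpha(T_1)) = \sigma(T_1)$. For landing in the target, $\sigma \in \Aut(C)$ forces $\sigma(T_1) \in C$ by the observation, while $\sigma(T_1) \cong T_1$ is tautological. For injectivity, if $\sigma(T_1) = \tau(T_1)$, then $\tau^{-1}\sigma$ fixes $T_1$, so $\sigma\Aut(T_1) = \tau\Aut(T_1)$. For surjectivity, given any $T \in C$ with $T \cong T_1$, I would pick an isomorphism $\sigma : T_1 \to T$ viewed as an element of $\Sym(V)$; then $\sigma(T_1) = T \in C$, so the observation forces $\sigma \in \Aut(C)$, and $\Phi(\sigma\Aut(T_1)) = T$.

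There is no real obstacle here; the only thing to be careful about is the action convention. The main content is really the observation that membership of $\sigma(T_1)$ in the switching class $C$ is equivalent to $\sigma$ preserving the associated oriented two-graph, and once that is in place everything is formal.
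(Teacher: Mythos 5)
Your proof is correct and follows essentially the same route as the paper's: the key point in both is that switching preserves the oriented two-graph, so that an isomorphism carrying $T_1$ to another member of $C$ automatically preserves $g_{T_1}$ and hence lies in $\Aut(C)$. If anything, yours is the more complete version — the paper only checks well-definedness (which it labels injectivity) and surjectivity, the latter via an explicit switching set $X$ with $T_2=T_1^X$, whereas you also verify that $\Phi$ lands in $C$ and is genuinely injective, using the correspondence of Lemma~\ref{l:onetoone} in place of the inline computation.
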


\begin{proof}
Let $g$ be the oriented two-graph associated to $C$. Now $\sigma\Aut(T_1)=\sigma'\Aut(T_2)$ if and only if there exists $\tau \in \Aut(T_1)$ such that $\sigma=\sigma'\tau$.  Since $\tau(T_1)=T_1$, we have $$\sigma(T_1)=\sigma'(\tau(T_1))=\sigma'(T_1)$$ so $\Phi$ is injective. Conversely, suppose that $T_1 = \sigma(T_2)$ for some $T_1,T_2 \in C$. Then there exists $X \subset V$ such that $T_2=T_1^X$ and for all distinct $x,y \in V$, we have $$e_{T_1}(x,y)=e_{T_2}(\sigma(x),\sigma(y))=e_{T_1^X}(\sigma(x),\sigma(y)).$$ Therefore for all pairwise distinct elements $x,y,z \in V$, $$\begin{array}{rcl} g_{T_1}(x,y,z) &=& e_{T_1}(x,y)e_{T_1}(y,z)e_{T_1}(z,x) \\&=&e_{T_1^X}(\sigma(x),\sigma(y))e_{T_1^X}(\sigma(y),\sigma(z))e_{T_1^X}(\sigma(z),\sigma(x))\\ &=&g_{T_1^X}(\sigma(x),\sigma(y),\sigma(z))\\ &=& g_{T_1}(\sigma(x),\sigma(y),\sigma(z)) \end{array}$$ and $\sigma \in \Aut(C)$.
\end{proof}

Let us note that two isomorphic tournaments need not be switching equivalent.  As an example, consider the transitive tournament on $3$ vertices: $T_1$ with edges directed $0\to 1$, $0 \to 2$ and $1 \to 2$.  Looking at the four different cases: $T_1$ is switching equivalent to the two other transitive tournaments with edges directed $1 \to 2 \to 0$, or $2 \to 0 \to 1$ and to the cyclic tournament with edges directed $0 \to 2 \to 1 \to 0$.  The other 4 tournaments on $3$ vertices form another switching equivalence class that also contains two transitive tournaments and one cyclic tournament.  

This example extends more generally.  For any $n$, a transitive tournament has no non-trivial automorphisms and so there will be $n!$ different isomorphic tournaments on $n$ vertices.  However, by Remark~\ref{rem:ct-equiv-classes}, there are $2^{n-1}$ different tournaments in a given switching equivalence class.  For $n \geq 3$, $n! > 2^{n-1}$ and so there will always be distinct transitive tournaments that are isomorphic, but not switching equivalent. 

Among other things, the next result shows that every odd divisor of $|\Aut(C)|$ is a divisor of $|\Aut(T)|$ for some $T \in C$:

\begin{lem}\label{l:cosets2}
Let $C$ be a switching class of tournaments on $V$ and let $\{T_1,T_2,\ldots, T_k\}$ be a complete set of isomorphism class representatives of elements of $C$. Then, $$\sum_{i=1}^k \frac{1}{|\Aut(T_i)|}=\frac{2^{|V|-1}}{|\Aut(C)|}.$$
\end{lem}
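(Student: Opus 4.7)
The plan is to partition the switching class $C$ into isomorphism classes, count each class using Lemma~\ref{l:cosets}, and then compare with the total size $|C|=2^{|V|-1}$ from Remark~\ref{rem:ct-equiv-classes}.

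First I would observe that ``being isomorphic'' is an equivalence relation on $C$, so the representatives $T_1,\dots,T_k$ partition $C$ into disjoint isomorphism classes:
$$C = \bigsqcup_{i=1}^k \{T \in C \mid T \cong T_i\}.$$
Next, for each $i$, Lemma~\ref{l:cosets} supplies a bijection $\Phi_i: \Aut(C)/\Aut(T_i) \to \{T \in C \mid T \cong T_i\}$. Since $V$ is finite, $\Aut(C) \le \Sym(V)$ is finite, so the index $[\Aut(C):\Aut(T_i)]$ is finite and equals $|\Aut(C)|/|\Aut(T_i)|$. Hence
$$|\{T \in C \mid T \cong T_i\}| = \frac{|\Aut(C)|}{|\Aut(T_i)|}.$$

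Summing over $i$ and using Remark~\ref{rem:ct-equiv-classes} gives
$$2^{|V|-1} = |C| = \sum_{i=1}^k \frac{|\Aut(C)|}{|\Aut(T_i)|} = |\Aut(C)| \sum_{i=1}^k \frac{1}{|\Aut(T_i)|},$$
and dividing through by $|\Aut(C)|$ yields the claimed identity.

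There is no real obstacle here: the work was already done in Lemma~\ref{l:cosets}, and this result is essentially an orbit-counting bookkeeping exercise. The only thing to be careful about is ensuring the union over isomorphism classes is disjoint (which is immediate from ``$\cong$'' being an equivalence relation) and that Lemma~\ref{l:cosets} is being applied with the correct $T_1$ replaced by each $T_i$ in turn, noting that $\Aut(T_i) \le \Aut(C)$ for every $i$ so the cosets make sense.
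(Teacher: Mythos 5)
Your proof is correct and follows exactly the same route as the paper's: partition $C$ into isomorphism classes, apply Lemma~\ref{l:cosets} to get $|\{T \in C \mid T \cong T_i\}| = |\Aut(C):\Aut(T_i)|$, and sum against $|C| = 2^{|V|-1}$ from Remark~\ref{rem:ct-equiv-classes}. Nothing to add.
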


\begin{proof}
By assumption, $C=C_1 \cup C_2 \cup \cdots C_k$ where $C_i$ is the set of all tournaments in $C$ isomorphic with $T_i$. By Lemma \ref{l:cosets},  $|C_i|=|\Aut(C):\Aut(T_i)|$ so that $$2^{|V|-1}=|C|=\sum_{i=1}^k |C_i|=\sum_{i=1}^k |\Aut(C):\Aut(T_i)| = |\Aut(C)| \cdot \sum_{i=1}^k \frac{1}{|\Aut(T_i)|}. $$ 
\end{proof}

\begin{rem}\label{rem:numberinsw}
For $k$ as in Lemma \ref{l:cosets2}, we actually have $$k=\frac{1}{|\Aut(C)|} \sum_{\substack{\sigma \in \Aut(C) \\ 2 \nmid o(\sigma) }} 2^{c(\sigma)-1},$$ where $c(\sigma)$ denotes the number of cycles in the cycle decomposition for $\sigma$ and $o(\sigma)$ is the order of $\sigma$ (see \cite[Theorem 3.2]{BC2000}). In particular, $|C|=2^{|V|-1}$ if $\Aut(C)=1$, consistent with Lemma \ref{l:cosets2}.
\end{rem}

We now have the following:

\begin{thm}\label{t:rand}
Let $C$ be a switching class of tournaments on $V$. The expected number of subtournaments of a random tournament on $n$ vertices ($n > |V|$) which lie in $C$ is: $${n \choose |V|} \cdot\frac{|\Sym(V):\Aut(C)|}{2^{{|V|-1 \choose 2}}}.$$
\end{thm}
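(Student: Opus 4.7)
The plan is a straightforward linearity-of-expectation calculation, with the real work absorbed into Lemma~\ref{l:cosets2}.

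First, I would reduce the expected count to a single probability. For a uniformly random tournament $R$ on $[n]$ and each $|V|$-subset $W \subseteq [n]$, let $X_W$ be the indicator that the induced subtournament $R[W]$ is isomorphic to some element of $C$ (I read ``lies in $C$'' this way, since $C$ consists of tournaments on a specific vertex set $V$ rather than on subsets of $[n]$). By symmetry $\Pr(X_W = 1)$ is the same for every $W$, and equals the probability $p$ that a uniformly random tournament on $V$ is isomorphic to an element of $C$. Hence the expected number of subtournaments of $R$ lying in $C$ equals $\binom{n}{|V|}\cdot p$.

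Second, I would evaluate $p$ by counting tournaments on $V$ by isomorphism class. Letting $T_1,\dots,T_k$ be isomorphism class representatives of elements of $C$ as in Lemma~\ref{l:cosets2}, orbit--stabiliser applied to the $\Sym(V)$-action on tournaments with vertex set $V$ gives $|V|!/|\Aut(T_i)|$ tournaments on $V$ isomorphic to $T_i$. Since distinct isomorphism classes are disjoint, summing and invoking Lemma~\ref{l:cosets2} yields
$$\sum_{i=1}^{k} \frac{|V|!}{|\Aut(T_i)|} \;=\; |V|! \cdot \frac{2^{|V|-1}}{|\Aut(C)|} \;=\; |\Sym(V):\Aut(C)| \cdot 2^{|V|-1}$$
tournaments on $V$ that are isomorphic to some element of $C$.

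Finally, dividing by the total $2^{\binom{|V|}{2}}$ tournaments on $V$, using the identity $\binom{|V|}{2}-(|V|-1)=\binom{|V|-1}{2}$, and multiplying by $\binom{n}{|V|}$ produces the stated formula. I do not expect any genuine obstacle: the content of the argument is packaged entirely in Lemma~\ref{l:cosets2}, and the only subtlety worth flagging is the interpretation of ``lies in $C$'' as isomorphism with some member of $C$ rather than literal set-membership, which is forced by the fact that the random tournament lives on $[n]$ while $C$ is a class of tournaments on $V$.
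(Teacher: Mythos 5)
Your argument is correct and is essentially the paper's own proof: both reduce to summing $|V|!/|\Aut(T_i)|$ over isomorphism class representatives, invoke Lemma~\ref{l:cosets2} to evaluate the sum as $|V|!\cdot 2^{|V|-1}/|\Aut(C)|$, and divide by $2^{\binom{|V|}{2}}$ using $\binom{|V|}{2}-(|V|-1)=\binom{|V|-1}{2}$. Your explicit framing via the probability $p$ and the reading of ``lies in $C$'' as isomorphism with a member of $C$ is exactly what the paper's computation implicitly assumes.
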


\begin{proof}
Let $\{T_1,T_2,\ldots, T_k\}$ be a complete set of isomorphism class representatives of elements of $C$ and set $r:=|V|$. By Lemma \ref{l:cosets2}, the expected number of elements of $C$ which appear as subtournaments of a random tournament on $n$ vertices is

$${n \choose r} \cdot r! \cdot \left(\frac{1}{2}\right)^{r \choose 2} \cdot \sum_{i=1}^k \frac{1}{|\Aut(T_i)|}={n \choose r} \cdot \frac{|\Sym(r)|}{|\Aut(C)|} \cdot \frac{2^{r-1}}{2^{{r \choose 2}}}={n \choose r} \cdot\frac{|\Sym(r):\Aut(C)|}{2^{{r-1 \choose 2}}},$$ as needed.
\end{proof}

Using the related notion of an \textit{$S$-digraph}, Babai and Cameron characterise $\Aut(C)$ as follows:

\begin{thm}[Babai--Cameron]\label{t:babai}
Let $G$ be a finite group. Then $G=\Aut(C)$ for some switching class of tournaments $C$ if and only if $G$ has cyclic or dihedral Sylow $2$-subgroups.
\end{thm}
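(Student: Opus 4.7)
The plan is to establish necessity (the Sylow 2-subgroup of $\Aut(C)$ must be cyclic or dihedral) and sufficiency (every such group arises) as two independent arguments, both going through the auxiliary device of an $S$-digraph.

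For necessity, the starting point is Moon's classical observation that $|\Aut(T)|$ is odd for any tournament $T$. Combined with Lemma \ref{l:cosets}, which identifies $\Aut(T)$ with the stabilizer of $T$ in $\Aut(C)$, this forces $P \cap \Aut(T) = 1$ for any Sylow 2-subgroup $P \le \Aut(C)$ and any $T \in C$. Consequently $P$ acts \emph{semi-regularly} on $C$, so Remark \ref{rem:ct-equiv-classes} yields $|P| \mid 2^{|V|-1}$. This gives a 2-power bound but not yet the structural constraint; I would extract the latter by analysing how involutions in $P$ interact with the switching operation. Specifically, for any $\sigma \in \Aut(C)$ and a fixed $T \in C$, there is a unique (up to complementation) $X_\sigma \subseteq V$ with $\sigma(T) = T^{X_\sigma}$, and the assignment $\sigma \mapsto X_\sigma$ is a 1-cocycle $\Aut(C) \to (\ZZ/2)^{V}/\{\emptyset,V\}$. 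For an involution $t \in P$ this cocycle condition forces $t(X_t) = X_t$ modulo complement, so $X_t$ is determined by its restriction to the $t$-fixed vertices and to one element of each transposed pair.

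For the second step of necessity, I would introduce the \emph{$S$-digraph} associated to $C$: a labelled digraph on $V$ (with labels in a fixed set $S$) obtained by choosing a reference tournament and encoding switching data, designed so that $\Aut(S\text{-digraph}) = \Aut(C)$ but with the extra property that the labelled structure is preserved pointwise only by odd-order elements of $P$. The detailed combinatorics of the $S$-digraph then reduces the question to classifying 2-subgroups of $\Sym(V)$ admitting a faithful action together with a semi-regular action on a 2-power set, compatible via the cocycle constraint above. The classical outcome is that such $P$ must have a cyclic subgroup of index at most two on which every other element acts by inversion --- which is precisely the cyclic/dihedral dichotomy.

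For sufficiency, given a group $G$ whose Sylow 2-subgroup is cyclic or dihedral, I would let $V = G$ with $G$ acting by left multiplication, and construct a $G$-invariant oriented two-graph $g$ by specifying it on a transversal and extending equivariantly. The obstruction to such an invariant $g$ existing (with $\Aut(g)$ exactly $G$ and not strictly larger) lies in $H^2(G,\ZZ/2)$; this group vanishes or admits a distinguishing class precisely under the cyclic-or-dihedral hypothesis on the Sylow 2-subgroup. Taking the switching class $C$ corresponding to $g$ via Lemma \ref{l:onetoone} completes the construction. The main obstacle is the structural classification step in the necessity direction: proving that the combined semi-regularity and cocycle constraints on $P$ actually force the cyclic/dihedral conclusion rather than merely restricting the order of $P$. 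The $S$-digraph formalism is the technical tool that makes this classification tractable by replacing the switching class by a rigid combinatorial object on which standard permutation-group theoretic arguments apply directly.
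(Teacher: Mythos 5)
First, note that the paper does not actually prove Theorem \ref{t:babai}: its ``proof'' is a one-line citation of \cite[Proposition 2.1 and Theorem 4.1]{BC2000}, so your proposal has to be measured against Babai--Cameron's argument rather than against anything in this text. Your opening moves for necessity are correct and are indeed how that argument begins: Moon's theorem that $|\Aut(T)|$ is odd, the identification of $\Aut(T)$ with the stabiliser of $T$ in $\Aut(C)$ via Lemma \ref{l:cosets}, hence semiregularity of a Sylow $2$-subgroup $P$ on the $2^{|V|-1}$ tournaments of $C$, and the observation that $\sigma \mapsto X_\sigma$ is a $1$-cocycle into $\FF_2^V/\{\emptyset,V\}$. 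The genuine gap is in the step you call ``the classical outcome''. A $2$-group with a cyclic subgroup of index at most two on which the outside elements act by inversion is cyclic, dihedral \emph{or generalized quaternion}; the entire content of \cite[Proposition 2.1]{BC2000} is to rule out every $2$-group other than the cyclic and dihedral ones, and in particular the generalized quaternion groups, which satisfy your stated criterion. Nothing in your sketch accomplishes this: semiregularity of $P$ on a set of size $2^{|V|-1}$ places no restriction on the isomorphism type of $P$ (any $2$-group of order at most $2^{|V|-1}$ admits such an action), and the condition $t(X_t)=X_t$ modulo complementation for an involution $t$ is merely a necessary condition from which you never extract a contradiction for a forbidden group such as $Q_8$. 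The $S$-digraph, which you invoke as the device that ``makes the classification tractable'', is precisely where the missing mathematics lives, and it is left entirely as a black box.

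The sufficiency direction has a second genuine problem. The obstruction to lifting a $G$-invariant oriented two-graph to a $G$-invariant tournament does live in $H^2(G,\FF_2)$ (via $0 \to \FF_2 \to \FF_2^V \to \FF_2^V/\FF_2 \to 0$), but it is false that $H^2(G,\FF_2)$ ``vanishes or admits a distinguishing class precisely under the cyclic-or-dihedral hypothesis'': already $H^2(C_2,\FF_2)\neq 0$ while $C_2$ is certainly realisable. What matters is whether a particular class lies in the image of $H^1(G,\FF_2^V/\FF_2)$, which depends on the restriction to a Sylow $2$-subgroup and on the permutation-module structure of $\FF_2^V$; this is a computation, not a vanishing statement. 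Moreover, even granting an invariant switching class, one must arrange $\Aut(C)=G$ exactly rather than $\Aut(C)\geq G$; this rigidity step (the analogue of a graphical-regular-representation argument, and the bulk of \cite[Theorem 4.1]{BC2000}) is acknowledged only parenthetically and never addressed. As it stands the proposal is a plausible road map in which the two hardest steps are missing.
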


\begin{proof}
This follows by combining \cite[Proposition 2.1 and  Theorem 4.1]{BC2000}.
\end{proof}
The following group-theoretic result is therefore relevant. Let $\PGL_2(\mathbb{F}_q)$ denote the quotient of $\GL_2(\mathbb{F}_q)$ by its centre (scalar matrices).   Let SL$_2(\mathbb{F}_q) \le \GL_2(\mathbb{F}_q)$ be the subgroup of matrices with determinant $1$, and $\PSL_2(\mathbb{F}_q)$ denote its image in  $\PGL_2(\mathbb{F}_q)$. Let $\PGGL_2(\mathbb{F}_q)$ be the extension  of $\PGL_2(\mathbb{F}_q)$ by field automorphisms of $\mathbb{F}_q$.

\begin{thm}[Gorenstein--Walter \cite{GW64}]\label{t:gorwalt}
If $G$ is a finite group with dihedral Sylow $2$-subgroups then $G/O(G)$ is either a $2$-group, $\Alt(7)$ or a subgroup of $\PGGL_2(\mathbb{F}_q)$ containing $\PSL_2(\mathbb{F}_q)$, where $O(G)$ denotes the largest normal subgroup of $G$ of odd order.
\end{thm}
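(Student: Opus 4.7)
The Gorenstein--Walter theorem is one of the landmark classification theorems in finite group theory, and its original proof occupies well over a hundred pages; what follows is a sketch of the strategy rather than a full proof.

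The first move is to reduce to the simple case. Since $O(G)$ has odd order, a Sylow $2$-subgroup $P$ of $G$ maps isomorphically onto a Sylow $2$-subgroup of $G/O(G)$, so $G/O(G)$ still has dihedral Sylow $2$-subgroups and satisfies $O(G/O(G))=1$. One may therefore assume $O(G)=1$. If $G$ is solvable, then the Fitting subgroup $F(G)$ is a $2$-group which is self-centralizing, and this forces $G=F(G)$ to be a $2$-group. Otherwise let $N$ be a minimal normal subgroup of $G$: it is a direct product of isomorphic non-abelian simple groups, each with dihedral (or smaller) Sylow $2$-subgroup; a short fusion argument shows that only one factor can occur, so $N$ is simple and $G$ embeds in $\Aut(N)$.

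The core of the proof is therefore the classification of non-abelian finite simple groups $G$ with dihedral Sylow $2$-subgroup $P$ of order $2^n\geq 4$. Let $z$ be the central involution of $P$. Using Alperin's fusion theorem together with an analysis of $N_G(P)$, one shows that $P$ contributes at most two $G$-conjugacy classes of involutions and that their fusion is tightly controlled. Brauer's theory of blocks of maximal defect is then applied to the principal $2$-block of $G$: the Cartan matrix, decomposition numbers and exceptional characters of this block are essentially determined by the dihedral structure of $P$, and from this information one pins down the isomorphism type of $C_G(z)$. Matching the resulting centralizer against the known centralizers in $\PSL_2(\FF_q)$ and in $\Alt(7)$ identifies $G$ as a simple group in the stated list.

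The main obstacle, and the deepest part of the argument, is the character-theoretic step that shows no ``exotic'' simple group has the required dihedral principal $2$-block structure. This uses modular representation theory in a delicate way, together with the Feit--Thompson odd-order theorem to eliminate certain solvable centralizer configurations that arise as intermediate obstructions. Once the simple case is settled, extending to arbitrary $G/O(G)$ is a routine analysis of outer automorphism groups: every subgroup of $\Aut(\PSL_2(\FF_q))\cong \PGGL_2(\FF_q)$ lying above $\PSL_2(\FF_q)$ has dihedral Sylow $2$-subgroup, whereas $\Sym(7)$ has Sylow $2$-subgroup of order $16$, which is not dihedral, so $\Alt(7)$ admits no proper almost-simple extension satisfying the hypothesis.
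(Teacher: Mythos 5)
The paper does not prove this statement: it is quoted verbatim from Gorenstein--Walter \cite{GW64}, whose original proof runs to well over a hundred pages of local analysis and modular representation theory, so there is no in-paper argument to compare against. Your submission is, by its own admission, a sketch of that classical strategy, and as a high-level narrative (reduction to $O(G)=1$, passage to a simple normal subgroup, fusion analysis of involutions, Brauer's theory of the principal $2$-block to pin down $C_G(z)$, identification against $\PSL_2(\FF_q)$ and $\Alt(7)$) it is broadly faithful to the historical proof. It cannot, of course, be checked as a proof at this level of detail, and the paper quite reasonably does not attempt one.

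There is, however, one concretely false step. You claim that if $G$ is solvable with $O(G)=1$ then the self-centralizing Fitting subgroup forces $G=F(G)$ to be a $2$-group. The inclusion $C_G(F(G))\le F(G)$ only yields an embedding of $G/F(G)$ into $\operatorname{Out}(F(G))$, and when $F(G)=O_2(G)$ is a Klein four group this outer automorphism group is $\Sym(3)$, which has odd-order elements. Indeed $\Sym(4)$ is solvable, has dihedral Sylow $2$-subgroups of order $8$, satisfies $O(\Sym(4))=1$, and is not a $2$-group; it is consistent with the theorem only because $\Sym(4)\cong\PGL_2(\FF_3)\le\PGGL_2(\FF_3)$ contains $\PSL_2(\FF_3)\cong\Alt(4)$, i.e.\ it lands in the third branch of the conclusion, not the first. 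A correct treatment of the solvable case must therefore split according to whether $O_2(G)$ is cyclic, properly dihedral, or a four-group, with the last case feeding into the $\PGGL_2(\FF_3)$ alternative. Relatedly, your minimal-normal-subgroup step silently assumes that $N$ is non-abelian, i.e.\ that $O_2(G)=1$; this is true for the non-solvable groups in the conclusion but needs an argument rather than an assertion.
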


Theorems \ref{t:babai} and \ref{t:gorwalt} lead naturally to the question: Do oriented two-graphs with an automorphism group containing $\PSL_2(q)$ actually exist? In the next section we will see that this is indeed the case.
 
\section{The Paley oriented two-graph}\label{sec:paley}

 We will be interested in tournaments constructed from certain types of functions on abelian groups:

\begin{defn}\label{d:ad}
Let $A$ be an abelian group of odd order. A function $f: A \rightarrow \{0,\pm 1\}$ is \textit{admissible} if $f(a)=0$ if and only if $a=0$ and  $f(-a)=-f(a)$  for all $a \in A$.
\end{defn} 
Clearly $\displaystyle \sum_{a \in A} f(a)=0$ for an admissible function $f$.
\begin{defn}
Suppose $f$ is an admissible function on $A$.  Let $T_{f}$ be the tournament with  $V(T_f)=A$ and with $(x,y) \in E(T)$ if and only if $f(y-x)=1$. 
\end{defn}







Let $q$ be a prime power and define a function $$\begin{array}{rcl} \chi: (\mathbb{F}_q,+) & \rightarrow & \{0,\pm 1\} \\ x &\mapsto &\begin{cases} 
0 & \mbox{ if $x=0$ } \\
1 & \mbox{ if there exists $y \in \mathbb{F}_q$ with $y^2 = x$ in $\mathbb{F}_q^\times$}\\ -1 & \mbox{ otherwise.}\end{cases}\end{array}$$ Note, in particular that $\chi(x)=-\chi(-x)$ for $x \in \mathbb{F}_q^\times$ so $\chi$ is an admissible function on $\mathbb{F}_q$.

\begin{defn}\label{d:paley}
Let $q \equiv 3 \pmod 4$ be a prime power and let $\chi$ be as above. 
\begin{enumerate}
\item The \textit{Paley Tournament} is the tournament $T_\chi$ on $\mathbb{F}_q$.
\item The \textit{Paley oriented two-graph} $g_q$ is defined by $g_q:=g_{T_\chi^+}$. 
\end{enumerate}
\end{defn}

Recall that the  \textit{projective line}  $\mathbb{P}^1 \mathbb{F}_q$ is the the set of equivalence classes $\{\bx=(x_1,x_2) \mid x_i \in \mathbb{F}_q\}/\sim$ with $\bx \sim \by$ if there exists $\lambda \in \mathbb{F}_q^\times$ such that $\lambda \bx = \by$. We have the following result: 

\begin{lem}\label{l:chiequalsq} $g_q$ is isomorphic to the oriented two-graph $g$ on $\mathbb{P}^1 \mathbb{F}_q$ given by $$g(\bx,\by,\bz)= \chi(\det(\bx:\by)\det(\by:\bz)\det(\bz:\bx)).$$  
\end{lem}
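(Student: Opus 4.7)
The plan is to exhibit an explicit bijection $\phi \colon \mathbb{F}_q \cup \{\infty\} \to \mathbb{P}^1\mathbb{F}_q$ and verify that $g_q(x,y,z) = g(\phi(x),\phi(y),\phi(z))$ for every ordered triple of distinct points. The natural identification sends $x \in \mathbb{F}_q$ to $[x:1]$ and $\infty$ to $[1:0]$, twisted if necessary to reconcile signs.

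First, I would check that the proposed $g$ is well-defined on $\mathbb{P}^1\mathbb{F}_q$: rescaling a representative $\bx$ by $\lambda \in \mathbb{F}_q^\times$ multiplies two of the three determinants by $\lambda$, hence the product by $\lambda^2$, and $\chi(\lambda^2)=1$. The alternating property of $g$ follows from the fact that swapping two arguments in the formula negates a single determinant, together with $\chi(-1)=-1$ (which is where the hypothesis $q\equiv 3\pmod 4$ enters). It is not necessary to verify the cocycle identity \eqref{e:eq1} independently: once $g$ and $g_q$ are shown to agree under $\phi$, the cocycle identity is inherited from $g_q$.

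The verification of the identity splits into two cases. In the first case all three points lie in $\mathbb{F}_q$. Using representatives $(x,1),(y,1),(z,1)$, the product of determinants equals $(x-y)(y-z)(z-x)$, while
\[ g_q(x,y,z) = \chi(y-x)\chi(z-y)\chi(x-z) = \chi\bigl(-(x-y)(y-z)(z-x)\bigr). \]
The two sides differ by the factor $\chi(-1)=-1$, and this sign is absorbed by taking, for instance, $\phi(x)=[-x:1]$ (or equivalently working in the chart $[1:x]$). In the second case one vertex equals $\infty$: with the representative $(1,0)$ for $\phi(\infty)$ two of the three determinants simplify to $\pm 1$, leaving a single $\chi$-factor which matches $g_q(\infty,y,z)$ after a direct expansion using the augmentation rule $e_{T_\chi^+}(x,\infty)=1$ and $e_{T_\chi^+}(\infty,x)=-1$.

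The main obstacle is not conceptual but notational: the cyclic factor $(x-y)(y-z)(z-x)$ contributes a sign $\chi(-1)$ that must be absorbed by the correct choice of affine chart for $\mathbb{P}^1\mathbb{F}_q$. Once that choice is made, the identity on triples containing $\infty$ reduces to the affine case via the alternating property of both $g_q$ and $g$, completing the identification.
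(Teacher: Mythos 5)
Your proof is correct and takes essentially the same route as the paper's: the explicit identification of $\mathbb{F}_q\cup\{\infty\}$ with $\mathbb{P}^1\mathbb{F}_q$, the well-definedness check under rescaling by $\lambda^2$, and the case split between affine triples and triples through $\infty$ --- and you are in fact more careful than the paper about the sign $\chi(-1)=-1$ arising from the cyclic product $(x-y)(y-z)(z-x)$, which under the naive chart $x\mapsto[x:1]$ gives $g_q=-g$ and so must be absorbed by twisting the chart (or by noting $g\cong -g$ via any element of $\mathrm{PGL}_2(\mathbb{F}_q)$ of non-square determinant). One small slip: swapping two arguments negates all three determinants, so the product changes by $(-1)^3=-1$ rather than by the negation of a single factor, but the conclusion that $g$ is alternating is unaffected.
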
 

\begin{proof}

Writing $V(T_\chi^+) = \mathbb{F}_q \cup \{\infty\}$ and $\mathbb{P}^1 \mathbb{F}_q =\{(x,1) \mid x \in \mathbb{F}_q \} \cup \{(1,0)\}$, we have a natural identification: 
$$V(T_\chi^+) \rightarrow \mathbb{P}^1 \mathbb{F}_q, \hspace{5mm} x \mapsto (x,1), \hspace{1mm} \infty \mapsto (1,0)$$ and equalities

 $\chi \left(\det\left(\begin{matrix} x & 1 \\ y & 1  \end{matrix} \right)\right) =\chi(x-y) \mbox{  and } \chi \left(\det\left(\begin{matrix} 1 & 0 \\ x & 1  \end{matrix} \right)\right) = 1 \mbox{ for distinct $x,y \in \mathbb{F}_q$. }$

 It remains to show that $g$ is independent of the choice of representatives $\bx,\by,\bz.$ If $\bx \sim \bx'$, $\by \sim \by'$ and $\bz \sim \bz'$ then there exist $\lambda,\mu, \nu \in \mathbb{F}_q^\times$ such that $\bx=\lambda \bx'$, $\by=\mu \by'$ and $\bz=\nu \bz'$. Now $$\begin{array}{rcl} g_q(\bx,\by,\bz) &=& \chi(\det(\bx:\by)\det(\by:\bz)\det(\bz:\bx)) \\ &=& \chi(\det(\lambda\bx':\mu\by')\det(\mu\by':\nu\bz')\det(\nu\bz':\lambda\bx)) \\ &=& \chi(\lambda^2\mu^2\nu^2\det(\bx':\by')\det(\by':\bz')\det(\bz':\bx')) \\ &=& \chi(\det(\bx':\by')\det(\by':\bz')\det(\bz':\bx')) \\ &=& g_q(\bx',\by',\bz').\end{array} $$
\end{proof}

This interpretation of $g_q$ facilitates the computation of its automorphism group. Recall that $\PGL_2(\mathbb{F}_q)$ acts naturally on $\mathbb{P}^1\mathbb{F}_q$ by right multiplication.

\begin{prop}\label{p:auto}
We have, $\PSL_2(\mathbb{F}_q) \le \Aut(g_q)$.
\end{prop}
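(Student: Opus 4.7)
The plan is to use the description of $g_q$ from Lemma \ref{l:chiequalsq} and exploit the multiplicativity of the determinant, together with the fact that elements of $\SL_2(\mathbb{F}_q)$ have determinant $1$.

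First I would work throughout with the realization of $g_q$ on $\mathbb{P}^1\mathbb{F}_q$, namely
\[
g_q(\bx,\by,\bz)=\chi\bigl(\det(\bx:\by)\det(\by:\bz)\det(\bz:\bx)\bigr),
\]
and fix a lift $M\in \SL_2(\mathbb{F}_q)$ of a given element $\sigma\in\PSL_2(\mathbb{F}_q)$, so that $\sigma$ acts on $\mathbb{P}^1\mathbb{F}_q$ by $\bx\mapsto \bx M$. The key observation is that if we stack two representatives $\bx,\by$ as rows of a $2\times 2$ matrix, then
\[
\det(\bx M:\by M)=\det\!\Bigl(\bigl(\tfrac{\bx}{\by}\bigr)M\Bigr)=\det(\bx:\by)\cdot\det(M).
\]
Applying this to each of the three factors appearing in $g_q$, I get
\[
g_q(\sigma(\bx),\sigma(\by),\sigma(\bz))
=\chi\!\Bigl(\det(M)^{3}\cdot\det(\bx:\by)\det(\by:\bz)\det(\bz:\bx)\Bigr).
\]
Since $M\in\SL_2(\mathbb{F}_q)$ we have $\det(M)=1$, so the right-hand side equals $g_q(\bx,\by,\bz)$, and $\sigma\in\Aut(g_q)$.

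The only subtlety to check is that this is well-defined on $\PSL_2(\mathbb{F}_q)$: two lifts of the same element of $\PSL_2(\mathbb{F}_q)$ differ by $\pm I$, which acts trivially on $\mathbb{P}^1\mathbb{F}_q$, so the action of $\sigma$ on points does not depend on the chosen lift. One should also briefly note independence of the chosen projective representatives $\bx,\by,\bz$, but this is exactly the calculation already performed in Lemma \ref{l:chiequalsq} (rescaling each row introduces a factor $\lambda^2\mu^2\nu^2$, which is a square and hence invisible to $\chi$). There is no substantial obstacle here; the proof is essentially a one-line determinant computation, and the whole point of passing to the projective model in Lemma \ref{l:chiequalsq} was precisely to make the $\PSL_2$-action manifest.
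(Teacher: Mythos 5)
Your proof is correct and follows essentially the same route as the paper: both work in the projective model of Lemma \ref{l:chiequalsq} and reduce everything to the identity $\det(\bx M:\by M)=\det(M)\det(\bx:\by)$, concluding via $\det(M)=1$ for $M\in\SL_2(\mathbb{F}_q)$. The only cosmetic difference is that the paper records the slightly more general relation $\chi(\det\sigma)\,g_q(\bx,\by,\bz)=g_q(\bx\bar{\sigma},\by\bar{\sigma},\bz\bar{\sigma})$ for arbitrary $\sigma\in\GL_2(\mathbb{F}_q)$ before specialising to determinant one.
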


\begin{proof}
Let $\bx:=(x_1,x_2), \by:=(y_1,y_2) \in \mathbb{P}^1 \mathbb{F}_q$ and $\sigma:=  \left(\begin{smallmatrix}
a & b  \\
c & d   \end{smallmatrix}\right) \in \GL_2(q)$. Denote by $\bar{\sigma}$ the image of $\sigma$ in $\PGL_2(q)$. We have,

$$\begin{array}{rcl}
\det(\bx\bar{\sigma},\by\bar{\sigma}) & = & \det((ax_1+cx_2,bx_1+dx_2):(ay_1+cy_2,by_1+dy_2)) \\
 & =& (ax_1+cx_2)(by_1+dy_2)-(bx_1+dx_2)(ay_1+cy_2) \\
 & = & (ad-bc)(x_1y_2-x_2y_1) \\
 & = & \det(\sigma)\det(\bx:\by). 
\end{array}$$

Hence if $\bx,\by,\bz \in \mathbb{P}^1(\mathbb{F}_q)$ are distinct elements, we have 
$$\begin{array}{rcl}
\chi(\det(\sigma)) g_q(\bx,\by,\bz) & = & \chi(\det(\sigma))\chi(\det(\bx:\by)\det(\by:\bz)\det(\bz:\bx)) \\
 & = & \chi(\det(\bx\bar{\sigma}:\by\bar{\sigma})\det(\by\bar{\sigma}:\bz\bar{\sigma})\det(\bz\bar{\sigma}:\bx\bar{\sigma})) \\
 & = & g_q(\bx\bar{\sigma},\by\bar{\sigma},\bz\bar{\sigma}). 

\end{array}$$ 

In particular, $\sigma \in \Aut(g_q)$ if $\det(\sigma)=1$ and therefore $\PSL_2(q) \le \Aut(g_q)$.
\end{proof}

\begin{rem}
 It is a consequence of Proposition \ref{p:auto} that for any oriented two-graph $h$ on $m < q+1$ vertices,  the set $\{W \subset \mathbb{P}^1 \mathbb{F}_q \mid g_q|_W \cong h\}$ is a union of orbits of $\PSL_2(q)$ and hence known by \cite{CMOT-R06}. This observation will be used later.
\end{rem}

\section{Tur\'{a}n results for $r$-triangles}
Recall the following from Section \ref{s:intro}:
\begin{defn}
For $r \ge 2$, an \textit{$r$-triangle} $\mathcal{H}(r)$ is any $r$-graph on $r+1$ vertices with $3$ edges. 
\end{defn}

We have the following generalisation of Mantel's theorem:

\begin{prop}{\cite[Proposition 14]{GS17}}\label{p:decaen}
Let $r \ge 2$ and $\cH$ be an $r$-graph and assume that $\cH$ is $\cH(r)$-free. Then $$|E(\cH)| \le \frac{n}{r^2}{n \choose r-1}$$ with equality if and only if every set of $r-1$ vertices is contained in exactly $n/r$ hyperedges.
\end{prop}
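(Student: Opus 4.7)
The plan is to apply Cauchy--Schwarz to the $(r-1)$-codegrees
\[
d(S) := |\{e \in E(\cH) : S \subset e\}|,
\]
combined with an upper bound on $\sum_{S} d(S)^2$ extracted directly from $\cH(r)$-freeness. A standard incidence count yields $\sum_{S} d(S) = r|E(\cH)|$, so Cauchy--Schwarz gives $\sum_{S} d(S)^2 \ge r^2|E(\cH)|^2/\binom{n}{r-1}$. The target bound is equivalent, after rearrangement, to $\sum_S d(S)^2 \le n|E(\cH)|$, so the whole problem reduces to proving this sum-of-squares estimate.

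First I would expand
\[
\sum_S d(S)^2 = r|E(\cH)| + N,
\]
where $N$ is the number of ordered pairs of distinct edges $(e_1,e_2)$ with $|e_1 \cap e_2| = r-1$: the diagonal $e_1 = e_2$ contributes $r|E(\cH)|$, while for $e_1\ne e_2$ the count $\binom{|e_1\cap e_2|}{r-1}$ equals $1$ if $|e_1\cap e_2| = r-1$ and $0$ otherwise. The heart of the argument is then the bound $N \le |E(\cH)|(n-r)$, which is the only place $\cH(r)$-freeness is used.

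To prove this, I would count triples $(e,v,w)$ with $e\in E(\cH)$, $v\in V(\cH)\setminus e$, $w\in e$, and $(e\setminus\{w\})\cup\{v\}\in E(\cH)$; these biject with the pairs enumerated by $N$ via $v = e_2\setminus e_1$ and $w = e_1\setminus e_2$. For a fixed pair $(e,v)$, two distinct valid $w_1\ne w_2$ would produce three distinct edges $e$, $(e\setminus\{w_1\})\cup\{v\}$, $(e\setminus\{w_2\})\cup\{v\}$ all lying inside the $(r+1)$-set $e\cup\{v\}$, i.e.\ a copy of $\cH(r)$. So there is at most one valid $w$ per $(e,v)$, and summing over the $|E(\cH)|(n-r)$ such pairs gives the bound.

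Putting the pieces together, $\sum_S d(S)^2 \le n|E(\cH)|$ and Cauchy--Schwarz yields $|E(\cH)| \le \frac{n}{r^2}\binom{n}{r-1}$. Equality forces the Cauchy--Schwarz step to be tight, so $d(S)$ is constant across $(r-1)$-subsets, and the incidence identity pins the constant to $n/r$; conversely, $d(S)\equiv n/r$ immediately yields $|E(\cH)| = \frac{n}{r^2}\binom{n}{r-1}$ by summing. The main obstacle I expect is identifying the right combinatorial device that converts $\cH(r)$-freeness into a clean arithmetic inequality: a tempting alternative --- applying Mantel's theorem to the triangle-free link graph obtained by fixing an $(r-2)$-set --- loses a factor of roughly $r/2$ because Mantel is far from tight on the $(n/r)$-regular link graphs realized by the design in the equality case, whereas the switch-a-vertex count above matches the equality case exactly.
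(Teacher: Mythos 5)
Your argument is correct and complete: the identity $\sum_S d(S)^2 = r|E(\cH)| + N$, the bijection between $N$ and the triples $(e,v,w)$, the observation that two valid switches $w_1\ne w_2$ for a fixed $(e,v)$ would place three distinct edges inside the $(r+1)$-set $e\cup\{v\}$, and the Cauchy--Schwarz step with its equality analysis all check out. The paper gives no proof here, citing \cite[Proposition 14]{GS17}, and the proof there is essentially this same de Caen-style codegree double count, so your proposal matches the intended argument.
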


In \cite{FF84} Frankl and F\"{u}redi characterise $3$-graphs for which every set of $4$ vertices spans either $0$ or $2$ hyperedges (such $3$-graphs are in particular $\cH(3)$-free). Using this they prove that $\pi(\cH(3)) \ge \frac{2}{7}$. Another consequence of their results is the following:

\begin{prop}\label{p:decaenbounds}
We have, 
\begin{equation}\label{e:dec} \frac{1}{2^{r-1}} \le \pi(\cH(r)) \le \frac{1}{r},\end{equation}
\end{prop}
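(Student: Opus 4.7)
The plan is to prove the two inequalities separately: the upper bound is an immediate consequence of Proposition \ref{p:decaen}, while the lower bound is witnessed by the classical spherical construction of Frankl and F\"uredi.

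For the upper bound, any $\cH(r)$-free $r$-graph $\cH$ on $n$ vertices satisfies $|E(\cH)| \le \frac{n}{r^2}\binom{n}{r-1}$ by Proposition \ref{p:decaen}. Dividing through by $\binom{n}{r}$ and using the identity $\binom{n}{r-1} = \tfrac{r}{n-r+1}\binom{n}{r}$ gives
$$\frac{|E(\cH)|}{\binom{n}{r}} \le \frac{n}{r(n-r+1)},$$
which tends to $1/r$ as $n \to \infty$. Taking the supremum over $\cH$ and then the limit yields $\pi(\cH(r)) \le 1/r$.

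For the lower bound, I would recall the construction of \cite[Example 3]{FF84}. Place $n$ points $v_1,\ldots,v_n$ in general position on $S^{r-2} \subset \mathbb{R}^{r-1}$, and let $\cH_n$ be the $r$-graph on these points whose edges are the $r$-subsets $\{v_{i_1},\ldots,v_{i_r}\}$ with $0 \in \mathrm{conv}(v_{i_1},\ldots,v_{i_r})$. By Wendel's theorem, the probability that $r$ uniformly random points on $S^{r-2}$ have the origin in their convex hull equals
$$1 - \frac{1}{2^{r-1}}\sum_{k=0}^{r-2}\binom{r-1}{k} = \frac{1}{2^{r-1}},$$
so a standard averaging argument produces a deterministic arrangement for which $|E(\cH_n)| = (1+o(1)) \cdot 2^{-(r-1)} \binom{n}{r}$, giving $\pi(\cH(r)) \ge 2^{-(r-1)}$.

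The main obstacle is verifying that $\cH_n$ is $\cH(r)$-free, i.e., that for any $r+1$ points in general position on $S^{r-2}$, at most two of the $r+1$ facet $r$-subsets have the origin in their convex hull. For $r=3$ this reduces to an elementary arc-length identity on the circle: writing $a_j$ for the sum of the two arcs adjacent to $p_j$ among four cyclically ordered points, one has $a_1+a_3 = a_2+a_4 = 2\pi$, so at most one term in each pair can be less than $\pi$, giving at most two facets with the origin in their convex hull. In general the statement is an oriented-matroid fact about the signs of the (two-dimensional) space of linear dependences among $r+1$ vectors in $\mathbb{R}^{r-1}$, and since this is precisely the content of \cite[Example 3]{FF84}, I would simply cite that source rather than reprove it.
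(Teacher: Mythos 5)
Your proposal is correct and follows the same route as the paper, which simply derives the upper bound from Proposition \ref{p:decaen} and cites \cite[Example 3]{FF84} for the lower bound. You have merely filled in the details (the binomial identity for the upper bound, and Wendel's theorem plus the facet-counting argument for the spherical construction), all of which check out.
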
 
\begin{proof}
The upper bound is immediate from Proposition \ref{p:decaen}, while the lower bound follows from \cite[Example 3]{FF84}.
\end{proof}

The following Proposition, which follows immediately from Theorem \ref{t:rand}, has the potential to improve on the lower bound in Proposition \ref{p:decaenbounds} when $r \le 8$:

\begin{prop}\label{p:lb}
Let $g$ be an oriented two-graph on $r$ vertices with the property that for any oriented two-graph $h$ with vertex set $V$ of size $r+1$, $$|\{A \subset V \mid h|_A \cong g\}| \le 2.$$ Then $$\frac{r!}{|\Aut(g)|2^{r-1 \choose 2}} \le \pi(\cH(r)).$$ 
\end{prop}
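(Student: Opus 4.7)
\medskip
\noindent\textbf{Proof plan.} The plan is to derandomise by exhibiting, for each tournament $T$ on $[n]$, an explicit $\cH(r)$-free $r$-graph $\cH(T)$ and then apply Theorem \ref{t:rand} to lower-bound the expected number of its edges when $T$ is chosen uniformly at random.

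First, given a tournament $T$ on $n$ vertices, define $\cH(T)$ to be the $r$-graph on $[n]$ whose edges are precisely those $r$-subsets $A$ for which the induced oriented two-graph $g_{T|_A}$ is isomorphic to $g$; equivalently, those $A$ for which $T|_A$ lies in the switching class $C$ corresponding to $g$. I would then verify that $\cH(T)$ is $\cH(r)$-free: an $r$-triangle in $\cH(T)$ would consist of three distinct $r$-subsets $A_1,A_2,A_3$ of some $(r+1)$-set $B\subset[n]$, each of which is an edge of $\cH(T)$. Setting $h:=g_{T|_B}$, this means $|\{A\subset B : h|_A\cong g\}|\ge 3$, contradicting the hypothesis on $g$. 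This step is the conceptual heart of the argument and is where the peculiar condition in the statement is used.

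Next, I would apply Theorem \ref{t:rand} to compute the expected number of edges of $\cH(T)$ when $T$ is a uniform random tournament on $[n]$. Since $\Aut(g)=\Aut(C)$, that theorem gives
\[
\mathbb{E}[|E(\cH(T))|]={n\choose r}\cdot\frac{|\Sym(r):\Aut(g)|}{2^{r-1\choose 2}}={n\choose r}\cdot\frac{r!}{|\Aut(g)|\,2^{r-1\choose 2}}.
\]
Consequently there exists a tournament $T$ with at least this many edges, and since this $\cH(T)$ is $\cH(r)$-free, $\ex(\cH(r),n)\ge {n\choose r}\cdot\frac{r!}{|\Aut(g)|\,2^{r-1\choose 2}}$.

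Finally, dividing by ${n\choose r}$ and letting $n\to\infty$ yields the claimed bound on $\pi(\cH(r))$. No obstacle of substance arises: once the hypothesis is recognised as forbidding triangles in $\cH(T)$, every remaining step is a direct citation of Theorem \ref{t:rand} and the definition of Tur\'an density, so the main (and only) point requiring care is the $\cH(r)$-freeness of $\cH(T)$ described above.
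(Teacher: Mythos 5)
Your argument is correct and is exactly the route the paper intends: the paper states that Proposition \ref{p:lb} ``follows immediately from Theorem \ref{t:rand}'', and your write-up simply makes explicit the two steps left to the reader, namely that the hypothesis on $g$ forces the hypergraph $\cH(T)$ to be $\cH(r)$-free, and that Theorem \ref{t:rand} (with $\Aut(C)=\Aut(g)$) gives the expected edge count, whence some tournament achieves it. No discrepancy with the paper's approach.
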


 \begin{defn}\label{d:special}
An oriented two-graph $g$ on $r$ vertices which satisfies the hypothesis of Proposition \ref{p:lb}  is called \textit{special}. \end{defn} 

In the next two subsections, we determine all special oriented two-graphs for $4 \le r \le 8$ which lead to an improvement in the lower bound provided by Proposition \ref{p:decaenbounds}.

\subsection{The case $r=4$} There are two oriented two-graphs on $4$ vertices, one of which (the Paley oriented two-graph $g_3$) is special:

\begin{lem}\label{l:spec}
$g_3$ is special and  $\Aut(g_3) \cong \PSL_2(\mathbb{F}_3)$. \end{lem}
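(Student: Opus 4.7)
The statement has two parts: the equality $\Aut(g_3) = \PSL_2(\mathbb{F}_3)$, and the claim that $g_3$ is special. By Proposition~\ref{p:auto} we already have $\PSL_2(\mathbb{F}_3) \le \Aut(g_3)$ and $|\PSL_2(\mathbb{F}_3)| = 12$. To upgrade the inclusion to an equality it suffices to prove $\Aut(g_3) \le \Alt(V)$, since $|\Alt(4)| = 12$. This follows at once from the alternating property: for any transposition $(i\,j) \in \Sym(V)$ and any third vertex $k$,
\[
g_3(j,i,k) = -g_3(i,j,k),
\]
so $(i\,j) \in \Aut(g_3)$ would force $g_3(i,j,k) = 0$, contradicting $g_3 \in \{\pm 1\}$. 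Hence $\Aut(g_3) = \Alt(V) \cong \PSL_2(\mathbb{F}_3)$.

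For specialness I must show that every oriented two-graph $h$ on a $5$-vertex set $V$ has at most two $4$-subsets $A \subset V$ with $h|_A \cong g_3$.  There are exactly two isomorphism classes of oriented two-graphs on $4$ vertices, namely $g_3$ (with $|\Aut|=12$) and a second class $g'$ (with $|\Aut|=4$), since by Remark~\ref{rem:ct-equiv-classes} there are $8$ switching classes on $4$ vertices and these split into $\Sym(4)$-orbits of sizes $2$ and $6$.  Fix a 4-subset $A = V \setminus \{u\}$ with $h|_A \cong g_3$ (otherwise there is nothing to prove). Applying the cocycle condition to each $4$-tuple of the form $\{x,y,z,u\}$, combined with the alternating property, shows that the extensions of $h|_A$ to an oriented two-graph on $V$ correspond bijectively to tournaments $T$ on $A$ with $g_T = -g_3$; there are therefore exactly $2^{|A|-1}=8$ such extensions, one for each tournament in the switching class of $-g_3$.

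For each of the eight extensions $h$ and each $v \in A$, the remaining work is to determine whether $h|_{V \setminus\{v\}} \cong g_3$. The action of $\Aut(g_3) \cong \Alt(4)$ on the set of $8$ extensions has exactly two orbits, each of size $4$, corresponding to the two score-sequence isomorphism classes $(1,1,1,3)$ and $(0,2,2,2)$ of tournaments in the switching class of $g_3$ (each such tournament has $|\Aut|=3$, hence $\Alt(4)$-orbit of size $12/3=4$). Moreover, the count of $v \in A$ for which $h|_{V \setminus \{v\}} \cong g_3$ is invariant along an $\Alt(4)$-orbit, so it is enough to check the claim on one representative per orbit. A direct calculation using these two representatives shows that in each case at most one of the four restrictions $h|_{V \setminus\{v\}}$ is isomorphic to $g_3$; combined with the contribution from $A$ itself, this gives the bound of $2$ as required. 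The main obstacle is this final two-case computation; the $\Alt(4)$-reduction is essential to keep it manageable, since a naive approach would require verification of all $32$ pairs $(h,v)$.
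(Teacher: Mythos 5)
Your argument is correct in substance but takes a genuinely different route from the paper on both halves of the statement. For the automorphism group, the paper applies the mass formula of Lemma~\ref{l:cosets2} to the two isomorphism classes of tournaments in the switching class of $g_3$ (each with automorphism group of order $3$), obtaining $|\Aut(g_3)|\cdot(\frac13+\frac13)=2^3$ and hence $|\Aut(g_3)|=12$; you instead exclude transpositions via the alternating property. For specialness, the paper essentially asserts the conclusion (any $5$-vertex oriented two-graph containing a copy of $g_3$ is a fixed one with exactly two such copies), whereas your reduction --- extensions of $h|_A$ biject with the $8$ tournaments $T$ on $A$ satisfying $g_T=-g_3$ via the cocycle condition, and the number of good restrictions is constant on $\Aut(g_3)$-orbits of extensions --- is more transparent and does real work that the paper leaves implicit. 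Both routes are valid, and yours has the advantage of isolating exactly which finite check remains.

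Two points need attention. First, ``no transposition lies in $\Aut(g_3)$'' does not by itself yield $\Aut(g_3)\le\Alt(V)$: a $4$-cycle is an odd permutation that is not a transposition, and $\langle(1\,2\,3\,4)\rangle$ contains no transposition yet is not contained in $\Alt(4)$. Your conclusion is nevertheless correct because you already have $\Alt(4)\cong\PSL_2(\mathbb{F}_3)\le\Aut(g_3)\le\Sym(4)$, so the only alternative to $\Aut(g_3)=\Alt(4)$ is $\Sym(4)$ itself, which does contain transpositions; you should say this explicitly. Second, the final ``direct calculation'' is asserted rather than performed. It does come out as you claim, and can be made clean rather than case-by-case: every extension is of the form $h=g_{S^+}$ with $S$ a tournament on $A$ in the switching class of $g_3$, so for $v\in A$ the restriction $h|_{V\setminus\{v\}}$ is represented by the augmented tournament $(S-v)^+$; all triangles of $(S-v)^+$ through the new sink are transitive, so $(S-v)^+$ lies in the class of $g_3$ exactly when the triangle $S-v$ is cyclic. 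Since $S$ is a cyclic triangle together with a dominating or dominated vertex $s$, the triangle $S-v$ is cyclic only for $v=s$, giving exactly one further good $4$-set and hence $|E|=2$, as required.
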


\begin{proof}
Let $h$ be an oriented two-graph with vertex set $V$ of size $5$ and consider the set $E=\{X \subset V \mid h|_X \cong g_3\}$. If $|E| > 0$ then $h|_X \cong g_3$ for some $X \subset V$ so $h \cong g_{T_3^+}$ where $T_3^+$ denotes the augmentation of $T_3$ and $|E|=2$. By Proposition \ref{p:auto}, $\Aut(g_3) \ge \PSL_2(\mathbb{F}_3)$. There are two isomorphism classes of tournaments in the switching class of $g_3$, both with an automorphism group of order $3$. Hence by Lemma \ref{l:cosets2}, $$|\Aut(g_3)| \cdot \left( \frac{1}{3}+\frac{1}{3} \right) = 2^3$$ so $|\Aut(g_3)|=12$ and the lemma follows.
\end{proof}


Define $\cH_q:=\{X \subset \mathbb{P}^1 \mathbb{F}_q \mid g_q|_X \cong g_3\}$. We can rephrase the main result of \cite{GS17} as follows:

\begin{thm}\label{t:r=4}
We have $\pi(\cH(4))=\frac{1}{4}$ and for any odd prime power $q \equiv 3 \pmod 4$, $\cH_q$ is $\cH(4)$-free and attains the upper bound on edges in Proposition~\ref{p:decaen}. Hence $\ex(\cH(4),q+1)=\frac{q+1}{16}{q+1 \choose 3}$. 
\end{thm}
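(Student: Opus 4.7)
The plan is to verify three things: that $\cH_q$ is $\cH(4)$-free, that $|E(\cH_q)|=\frac{q+1}{16}\binom{q+1}{3}$, and that these together yield $\pi(\cH(4))=\frac{1}{4}$. The first is immediate from Lemma~\ref{l:spec}: a copy of $\cH(4)$ inside $\cH_q$ would give a $5$-set $V\subset \mathbb{P}^1\FF_q$ with three $4$-subsets $X$ satisfying $g_q|_X\cong g_3$, contradicting specialness of $g_3$.

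For the edge count, by Proposition~\ref{p:decaen} it suffices to show that every $3$-subset of $\mathbb{P}^1\FF_q$ lies in exactly $(q+1)/4$ edges. I would first establish that $\PGL_2(\FF_q)\le\Aut(\cH_q)$. By (the proof of) Proposition~\ref{p:auto}, any $\sigma\in\GL_2(\FF_q)$ satisfies $g_q(\bx\sigma,\by\sigma,\bz\sigma)=\chi(\det\sigma)g_q(\bx,\by,\bz)$, so $\sigma$ is an isomorphism from $g_q|_X$ to $\chi(\det\sigma)\cdot g_q|_{X\sigma}$. The assertion therefore reduces to the finite check that the isomorphism class of each $4$-vertex oriented two-graph is closed under sign reversal: the $S_4$-orbit of $g_3$ among the $8$ switching classes on $4$ vertices has size $2$ (consistent with $|\Aut(g_3)|=24/2=12$) and is visibly closed under negation, while the complementary orbit of size $6$ is likewise closed. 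Since $\PGL_2(\FF_q)$ acts sharply $3$-transitively on $\mathbb{P}^1\FF_q$, it suffices to count $v$ for which $\{0,1,\infty,v\}\in E(\cH_q)$.

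Working with $T_\chi^+$ and the alternation of $g_q$, the four signs of $g_q$ on $\{0,1,\infty,v\}$ are explicit functions of $\chi(v)$ and $\chi(v-1)$; matching them against the $g_3$-orbit pins down the relation $g_q|_{\{0,1,\infty,v\}}\cong g_3$ to the single case $\chi(v)=-1$, $\chi(v-1)=1$. A character sum using $\sum_v\chi(v)=0$, $\chi(-1)=-1$ (as $q\equiv 3\pmod 4$), and the identity $\sum_{v\ne 0,1}\chi(v(v-1))=-1$ (obtained via the substitution $v\mapsto 1-1/v$) then yields
\[ |\{v\in\FF_q\setminus\{0,1\}\,:\,\chi(v)=-1,\ \chi(v-1)=1\}|=\tfrac{q+1}{4}. \]
Hence $\ex(\cH(4),q+1)=\frac{q+1}{16}\binom{q+1}{3}$; dividing by $\binom{q+1}{4}$ and letting $q\to\infty$ through primes $q\equiv 3\pmod 4$ gives $\pi(\cH(4))\ge\frac{1}{4}$, matching the upper bound of Proposition~\ref{p:decaenbounds}. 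The principal obstacle is the orbit analysis on $4$-vertex switching classes needed to isolate the correct $(\chi(v),\chi(v-1))$-case and to justify closure under sign reversal; the ensuing character sum is then routine.
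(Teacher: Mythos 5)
Your proposal is correct, but it reverses the paper's logical flow and fills in a computation the paper outsources, so it is worth comparing the two. The paper obtains $\pi(\cH(4))\ge\frac14$ not from the Paley construction at all but from the random-tournament argument: $g_3$ is special with $|\Aut(g_3)|=12$ (Lemma~\ref{l:spec}), so Proposition~\ref{p:lb} gives $\frac{4!}{12\cdot 2^{3}}=\frac14$ directly and for every $n$; the hypergraph $\cH_q$ is then invoked only for the exact value of $\ex(\cH(4),q+1)$, with extremality deferred to \cite[Theorem 11]{GS17} or to the convolution identity $(\chi\star\chi)(x)=1$ for $x\ne 0$ implicit in the proof of Lemma~\ref{l:admissext}. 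You instead prove extremality of $\cH_q$ directly and deduce the density lower bound by letting $q\to\infty$ along primes $q\equiv 3\pmod 4$; since the limit defining $\pi$ exists, passing to this subsequence is legitimate. Your character sum is essentially the Jacobi-sum computation hiding inside Lemma~\ref{l:admissext}, just organised via sharp $3$-transitivity on the single triple $\{0,1,\infty\}$: the condition you isolate, $\chi(v)=-1$ and $\chi(v-1)=1$ (equivalently, $\{0,1,v\}$ is a cyclic triangle of $T_\chi$, the odd-cyclic-triangle parity being what distinguishes the class of $g_3$ from the other $4$-vertex oriented two-graph), is the right one, and the count $\frac{q+1}{4}$ follows as you say. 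The extra ingredient you need, $\PGL_2(\FF_q)\le\Aut(\cH_q)$ via $-g_3\cong g_3$, is also correct (the two $\Sym(4)$-orbits have sizes $2$ and $6$, so negation fixes each), though you could bypass it: $\PSL_2(\FF_q)$ is already transitive on unordered triples of $\mathbb{P}^1\FF_q$, since the stabiliser of $\{0,1,\infty\}$ in $\PGL_2(\FF_q)$ contains the determinant-$(-1)$ map $x\mapsto 1/x$, which lies outside $\PSL_2(\FF_q)$ when $q\equiv 3\pmod 4$. What the paper's route buys is generality --- Proposition~\ref{p:lb} also yields the $r=6,7,8$ bounds where no algebraic construction is known; what your route buys is a self-contained verification of the exact extremal count without citing \cite{GS17}.
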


\begin{proof}
Since $g_3$ is special by Lemma \ref{l:spec}, $\frac{1}{4} \le \pi(\cH(4))$ by Proposition \ref{p:lb}. Hence   $\pi(\cH(4))=\frac{1}{4}$ by Proposition \ref{p:decaenbounds}. The exact result is a consequence of  the fact that $\cH_q$ is $\cH(4)$-free by Lemma \ref{l:spec} and extremal with respect to this property by a counting argument (see \cite[Theorem 11]{GS17} or the proof of Lemma \ref{l:admissext}). 
\end{proof}

In Theorem \ref{t:uniq} below, we will see that, when $q \equiv 3 \pmod 4$ is prime,  $\cH_q$ is unique among $4$-graphs constructed from admissible functions which produce hypergraphs that are $\cH(4)$-free and attain the upper bound on edges in Proposition~\ref{p:decaen}. 

\begin{rem}
Among other results, in \cite{BBLZ20} the authors show that one can construct a $4$-graph which is $\cH(4)$-free and attains the  bound in Proposition~\ref{p:decaen} from any skew Hadamard matrix (or skew-conference matrix) of order $n \equiv 0 \pmod 4$ (such matrices are conjectured to exist for all such $n$). This extends Theorem \ref{t:r=4} and yields an abundance of extremal examples which do \textit{not} necessarily arise from tournaments constructed from admissible functions.

There is a correspondence between skew-Hadamard matrices and a class tournaments that include the Paley tournaments.  Brown and Reid~\cite{BR72} introduced the notion of a \emph{doubly regular tournament}, which is a tournament in which every vertex has the same out-degree and every pair of vertices have the same number of common out-neighbours.  They showed that if such a tournament of order $n$ exists, then the out-degree of every vertex is $\frac{n-1}{2}$ and the number of common out-neighbours of any pair is $\frac{n-3}{4}$.  Necessarily, $n \equiv 3 \pmod{4}$.  The Paley tournaments are an example of doubly-regular tournaments.  Brown and Reid showed that there is a doubly-regular tournament of order $n$ if and only if there is a skew-Hadamard matrix of order $n+1$.  

Thus, extending the results of~\cite{GS17}, the results of Belkouche, Boussa\"{i}ri, Lakhlifi and Zaidi~\cite{BBLZ20} show that for any $n$ for which there exists a skew-Hadamard matrix of order $n$, $\ex(\cH(4), n) = \frac{n}{16}\binom{n}{3}$ and that the lower bound is attained by Baber's construction applied to a doubly-regular tournament on $n-1$ vertices augmented with a vertex with all incident edges directed towards it.

Interestingly, in~\cite{BBLZ20}, it is shown that for $n \equiv 3 \pmod{4}$, the hypergraph constructed in a similar manner from a tournament of order $n$ will have the maximum number of edges exactly when the tournament is switching equivalent to a doubly-regular tournament.  They give closely related results for $n \equiv 2 \pmod{4}$, characterising tournaments that achieve an upper bound in terms of the design-type properties of the Seidel (or signed) adjacency matrix.
\end{rem}

\subsection{The cases $5 \le r \le 8$}

For the cases $r = 5, 6, 7, 8$, an exhaustive search with SAGE was used to find examples of tournaments giving lower bounds on $\pi(\cH(r))$ using Proposition~\ref{p:lb}.  SAGE has a built-in iterator over all non-isomorphic tournaments on a given number of vertices using Nauty and these were used to generate all switching classes for tournaments on a given number of vertices and the number of automorphisms of each representative in a switching class.  In order to determine whether a given switching and its associated two-graph is special, the following lemma was used to reduce the number of cases to check.  Recall that for a tournament $T$, the tournament $T^+$ is obtained from $T$ by adding a new vertex $\infty$, with all incident edges directed towards the vertex $\infty$.

\begin{lem}\label{l:counting-copies}
Let $g$ be an oriented two-graph on $V$ and let $C$ be its associated switching class of tournaments on $V$.  The two-graph $g$ is special if{f} for every $T \in C$, there is at most one vertex $v \in V$ with $T^+ - v \in C$.
\end{lem}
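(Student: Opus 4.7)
My plan is to tie both implications to one elementary fact: for any tournament $S$ on a vertex set $W$ and any $v \in W$, the restricted oriented two-graph $g_S|_{W \setminus \{v\}}$ coincides with $g_{S-v}$. Combined with Lemma \ref{l:univ}, which allows us to replace any tournament by a switching-equivalent one that is augmented at a prescribed vertex, this reduces the lemma to counting size-$r$ restrictions of an $(r+1)$-vertex two-graph.

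For the forward direction (specialness $\Rightarrow$ vertex condition), I would fix $T \in C$ and set $h := g_{T^+}$, an oriented two-graph on the $r+1$ vertices $V \cup \{\infty\}$. The size-$r$ subsets of $V(h)$ split into $V$ (with $h|_V = g_T = g$) and the subsets $V(h) \setminus \{v\}$ for $v \in V$ (with $h|_{V(h) \setminus \{v\}} = g_{T^+ - v}$). Specialness of $g$ forces $|\{A : h|_A \cong g\}| \le 2$, and since $A = V$ already provides one such subset, at most one $v \in V$ can satisfy $g_{T^+ - v} \cong g$; this is what is meant by $T^+ - v \in C$.

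For the backward direction, I would take an arbitrary two-graph $h$ on $r+1$ vertices, enumerate the size-$r$ subsets $A_i = V(h) \setminus \{v_i\}$ with $h|_{A_i} \cong g$ for $i = 1, \ldots, k$, and aim to show $k \le 2$. Assuming $k \ge 1$, Lemma \ref{l:univ} yields a tournament $S$ in the switching class of $h$ that is augmented at $v_1$, so $S = T_1^+$ for some tournament $T_1$ on $A_1$. Since $g_{T_1} = h|_{A_1} \cong g$, I would fix an isomorphism $\phi : A_1 \to V$ carrying $T_1$ to a tournament $T \in C$, and extend $\phi$ to $A_1 \cup \{v_1\} \to V \cup \{\infty\}$ by sending $v_1 \mapsto \infty$. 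Then for each $i \ge 2$, the identity $h|_{A_i} = g_{T_1^+ - v_i}$ transports under $\phi$ into $g_{T^+ - \phi(v_i)} \cong g$ with $\phi(v_i) \in V$. The hypothesis applied to $T$ permits at most one such index $i \ge 2$, giving $k \le 2$.

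The main obstacle is purely bookkeeping: carefully identifying the ``infinity vertex'' $\infty$ of Definition \ref{d:aug} with the vertex $v_1$ at which the augmented representative is chosen, and extending the labelling isomorphism $\phi$ consistently across this identification. Once that setup is in place, both directions follow immediately from Lemma \ref{l:univ} and the compatibility between restriction of two-graphs and vertex deletion in tournaments.
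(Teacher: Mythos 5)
Your proof is correct and takes essentially the same route as the paper's: both directions come down to counting the $r$-subsets of an augmented tournament $(T')^+$ with $T'\in C$, after using switching (your appeal to Lemma \ref{l:univ}; the paper switches $L$ so that the distinguished vertex becomes a sink) to put the $(r+1)$-vertex tournament into augmented form. Your write-up merely makes the transporting isomorphism $\phi$ and the identification $g_S|_{W\setminus\{v\}}=g_{S-v}$ explicit where the paper leaves them implicit.
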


\begin{proof}
Set $|V| = r$.  By translating oriented two-graphs to tournaments, one can see that $g$ is special if{f} for every tournament $L$ on $r+1$ vertices, there are at most $2$ $r$-sets of vertices that induce a tournament isomorphic to an element of $C$.  Thus, it is clear that this condition is necessary for a two-graph to be special.  

To see that it is sufficient, let $L$ be any tournament on $r+1$ vertices.  If $L$ has no $r$-sets that induce a tournament isomorphic to an element of $C$, there is nothing to prove.  Therefore, assume that there is a vertex $w$ in $L$ so that $L-w$ is isomorphic to a tournament $T_1 \in C$. There is a tournament $L'$ that is switching-equivalent to $L$ with the property that, in $L'$, every edge incident to $w$ is directed towards $w$.  Since $C$ is an equivalence class for the switching operation, the number of $r$-sets of vertices in $L$ that induce a tournament isomorphic to an element of $C$ is exactly the same as the number of $r$-sets of vertices in $L'$ with this property.  Furthermore, $L' -w$ is isomorphic to some tournament $T_1' \in C$ and so $L' \cong (T_1')^+$.  Therefore, there are at most $2$ $r$-sets of vertices in $L'$ that induce a tournament isomorphic to an element of $C$ and the same holds for $L$.
\end{proof}

Using Lemma~\ref{l:counting-copies}, it is straightforward to determine if an oriented two-graph is special from the list of tournaments in the corresponding switching equivalence class.

For $r=5$ there are no special oriented two-graphs.

For $r=6$ there are two special oriented two-graphs with the best lower bound provided by the switching class $g$ of the tournament in Figure \ref{fig:tourn6}.

\begin{figure}[htb]
\caption{A special tournament on $6$ vertices}
\begin{center}
\begin{tikzpicture}
	[decoration={markings, mark=at position 0.6 with {\arrow{>}}}] 
	\tikzstyle{vertex}=[circle, draw=black,  minimum size=5pt,inner sep=0pt]
	
	\node[vertex, label=above:{}] at (2, 0) (y) {};
	\node[vertex, label=above:{}] at (2*0.309, 2*0.95) (z) {};
	\node[vertex, label=left:{}] at (2*-0.809, 2*0.588) (w) {};
    \node[vertex, label=below:{}] at (2*-0.809, 2*-0.588) (a) {};
	\node[vertex, label=right:{}] at (2*0.309, 2*-0.95) (b) {};	
    \node[vertex, label=right:{}] at (0, 0) (c) {};		
	
	\draw[postaction={decorate}] (y) -- (z);
	\draw[postaction={decorate}] (z) -- (w);
	\draw[postaction={decorate}] (w) -- (a);
	\draw[postaction={decorate}] (a) -- (b);
	\draw[postaction={decorate}] (b) -- (y);
	
	\draw[postaction={decorate}] (z) -- (b);
	\draw[postaction={decorate}] (b) -- (w);
	\draw[postaction={decorate}] (w) -- (y);
	\draw[postaction={decorate}] (y) -- (a);
	\draw[postaction={decorate}] (a) -- (z);
	
	\draw[postaction={decorate}] (c) -- (b);
	\draw[postaction={decorate}] (c) -- (w);
	\draw[postaction={decorate}] (c) -- (y);
	\draw[postaction={decorate}] (c) -- (a);
	\draw[postaction={decorate}] (c) -- (z);
	
\end{tikzpicture} \hspace{10pt}
\end{center}

\label{fig:tourn6}
\end{figure}
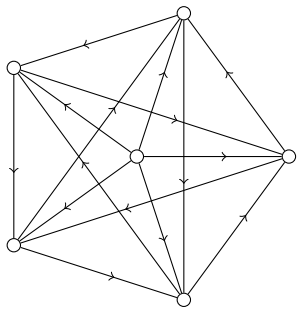

\begin{lem}
$g$ is special and $\Aut(g)$ has order $5$.
\end{lem}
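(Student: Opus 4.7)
The plan is to compute $|\Aut(T)|=5$ for the tournament $T$ in Figure~\ref{fig:tourn6}, then deduce $|\Aut(g)|=5$ via Lemma~\ref{l:cosets2}, and finally verify speciality using Lemma~\ref{l:counting-copies}.

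For $|\Aut(T)|=5$: the rotation $\rho=(y\,z\,w\,a\,b)$ preserves the outer pentagon, the inner $5$-cycle of chords, and the five edges out of $c$, so $\langle\rho\rangle\le\Aut(T)$. For the converse, $c$ is the unique vertex of out-degree $5$ in $T$, so it is fixed by every automorphism, and $\Aut(T)$ injects into $\Aut(P)$, where $P$ is the pentagon tournament induced on $\{y,z,w,a,b\}$. After labelling $y,z,w,a,b$ as $0,1,2,3,4$, the tournament $P$ is the Cayley tournament on $\mathbb{Z}/5\mathbb{Z}$ with connection set $\{1,3\}$. Since any tournament automorphism group has odd order (an involution would reverse some edge), and since the stabilizer in $\Aut(P)$ of a vertex embeds in $\Sym(\text{out-nbrs})\times\Sym(\text{in-nbrs})$ of order $4$, that stabilizer is trivial; as $\rho$ acts transitively on the pentagon, orbit-stabilizer yields $|\Aut(P)|=5$.

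From $\Aut(T)\le\Aut(g)$ we get $|\Aut(g)|\ge5$. For the upper bound, Lemma~\ref{l:cosets2} with $|V|=6$ gives $\sum_i|\Aut(T_i)|^{-1}=32/|\Aut(g)|$, where $\{T_i\}$ is a complete set of isomorphism-class representatives of $C$. I would enumerate the $32$ tournaments in $C$ by computer (using the SAGE procedure described earlier in this subsection), identify their isomorphism classes and automorphism orders, and check that the sum equals $32/5$, forcing $|\Aut(g)|=5$.

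For speciality I apply Lemma~\ref{l:counting-copies}: for each $T'\in C$ at most one $v\in V$ should satisfy $(T')^+-v\in C$. For $T'=T$, the action of $\rho$ has orbits $\{c\}$ and $\{y,z,w,a,b\}$, so only two cases need checking. When $v=c$: $T^+-c$ is the pentagon with the sink $\infty$ attached, and switching at $\{\infty\}$ reverses the five edges incident to $\infty$, turning $\infty$ into a source and yielding a tournament isomorphic to $T$ (with $\infty$ in the role of $c$); hence $T^+-c\in C$, providing the single permitted vertex. When $v=y$, one must show the oriented two-graph of $T^+-y$ is not isomorphic to $g$, which one does by comparing invariants such as automorphism group orders or counts of induced four-vertex oriented two-graphs. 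The same check must be repeated for each remaining isomorphism-class representative of $C$. The main obstacles are the upper bound $|\Aut(g)|\le 5$ and the exhaustive speciality verification, both handled by the SAGE enumeration of the switching class.
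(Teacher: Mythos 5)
Your proposal is correct and follows essentially the same route as the paper: the paper's proof simply reports a SAGE computation showing $g$ is special and that $C$ has $8$ isomorphism classes with automorphism groups of orders $5,5,1,1,1,1,1,1$, then applies Lemma~\ref{l:cosets2} to get $|\Aut(g)|=5$, exactly the mechanism you describe. Your by-hand verification that $|\Aut(T)|=5$ (giving the lower bound $|\Aut(g)|\ge 5$) and the orbit reduction for the speciality check of $T$ itself are correct additions, but the decisive steps still defer to the same computer enumeration as in the paper.
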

\begin{proof}
Using a computer, we calculate that $g$ is special and that there are 8 isomorphism classes of tournaments in $C$ with automorphism groups of orders $5,5,1,1,1,1,1,1$. Hence $\Aut(C)$ has order $5$ by Lemma  \ref{l:cosets2}. \end{proof}

\begin{cor}
We have $\frac{9}{64} \le \pi(\cH(6)) \le \frac{1}{6}$ and $\ex(\cH(6),12)=264$.
\end{cor}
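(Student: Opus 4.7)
My plan is to assemble the two $\pi$-inequalities from results already proved, and then treat the $n = 12$ case by an explicit construction. The lower bound $\tfrac{9}{64} \le \pi(\cH(6))$ comes from Proposition \ref{p:lb} applied to the special two-graph $g$ of Figure \ref{fig:tourn6}: the previous lemma gives $|\Aut(g)| = 5$, so
\[
\pi(\cH(6)) \ge \frac{6!}{|\Aut(g)|\cdot 2^{\binom{5}{2}}} = \frac{720}{5\cdot 1024} = \frac{9}{64}.
\]
The upper bound $\pi(\cH(6)) \le \tfrac{1}{6}$ is the $r=6$ instance of Proposition \ref{p:decaenbounds}.

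For the exact value at $n=12$, Proposition \ref{p:decaen} with $r=6$ gives $\ex(\cH(6),12) \le \tfrac{12}{36}\binom{12}{5}=264$, with equality iff the extremal hypergraph is a $5$-$(12,6,2)$ design. I would match this bound using the Paley construction $\cH := \{A \subseteq \mathbb{P}^1\mathbb{F}_{11} : g_{11}|_A \cong g\}$. Specialness of $g$ forces every $7$-subset of $\mathbb{P}^1\mathbb{F}_{11}$ to contain at most two edges of $\cH$, so $\cH$ is automatically $\cH(6)$-free; and once $|\cH|=264$ is known, the equality clause of Proposition \ref{p:decaen} promotes $\cH$ to a $5$-$(12,6,2)$ design in which every $7$-set spans $0$ or $2$ edges, yielding both the asserted extremal number and the structural claim of Theorem \ref{t:main}.

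The main technical step, and the place I expect the real work to sit, is the verification $|\cH|=264$. By Proposition \ref{p:auto}, $\cH$ is a union of $\PSL_2(11)$-orbits on the $\binom{12}{6}=924$ six-subsets of $\mathbb{P}^1\mathbb{F}_{11}$, and by the remark following that proposition, these orbits together with the isomorphism type of $g_{11}|_A$ on each can be extracted from the tables of \cite{CMOT-R06}. Since $|\PSL_2(11)|=660$, every orbit size divides $660$, so the list of candidate orbit sizes dividing into $924$ is very short and the required calculation is entirely finite. In practice I would perform this step by computer: iterate over orbit representatives and check each restriction against $g$ (using that a $6$-subset lies in $\cH$ precisely when one of the $|\Aut(g)|=5$ labellings produces the sign pattern of $g$).
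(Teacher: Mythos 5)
Your proof is correct and follows essentially the same route as the paper: Proposition~\ref{p:lb} with $|\Aut(g)|=5$ for the lower bound, Proposition~\ref{p:decaenbounds} for the upper bound, and the $6$-graph $\{A \subset \mathbb{P}^1\mathbb{F}_{11} \mid g_{11}|_A \cong g\}$ for the exact value at $n=12$. The only difference is that you spell out the finite verification that this $6$-graph has exactly $264$ edges (via $\PSL_2(11)$-orbits), a step the paper's proof leaves implicit in the phrase ``extremal with respect to this property by Proposition~\ref{p:decaen}''.
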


\begin{proof}
Since $g$ is special, $\frac{9}{64} \le \pi(\cH(6)) \le \frac{1}{6}$ by Proposition \ref{p:lb} and Proposition \ref{p:decaenbounds}. The $6$-graph constructed from $g_{11}$, $\{A \subset V(g_{11}) \mid g_{11}|_A \cong g\}$ is  $\cH(6)$-free and extremal with respect to this property by Proposition \ref{p:decaen}. 
\end{proof}

We speculate that this example is related to the existence of a $4$-tournament on $n$ vertices with at least $\frac{9}{64}{n \choose 5}$ directed $5$-simplices established by Leader and Tan (see \cite[Section 4]{LT10}]).

For $r=7$ there is a unique special oriented two-graph with an automorphism group isomorphic to $C_3 \times C_3$.  One tournament in the switching class of this oriented two-graph is given in Figure~\ref{fig:tourn7}. Using SAGE, it was verified that this oriented two-graph is special and the switching class has 16 tournaments with automorphism groups of order 1, 1, 1, 1, 3, 3, 3, 3, 3, 3, 3, 3, 9, 9, 9, 9.  This gives the following.

\begin{figure}[htb]
\caption{A special tournament on $7$ vertices}
\begin{center}
\begin{tikzpicture}
	[decoration={markings, mark=at position 0.6 with {\arrow{>}}}] 
	\tikzstyle{vertex}=[circle, draw=black,  minimum size=5pt,inner sep=0pt]
		
	\foreach \x/\y in {0/right, 1/above, 2/above, 3/left, 4/below, 5/below}
	{
		\node[vertex, label=\y:{$\x$}] at ({60*\x}:2cm) (\x) {};
	}
	
	\node[vertex, label=above:{$6$}] at (-4, 0) (6) {};
	\foreach \x in {0, 1, 2, 3, 4, 5}
	{
	\draw[->] (6) -- ++(1, {1.5-3*\x/5});
	}
	
	\draw[postaction={decorate}] (0) --(2);
	\draw[postaction={decorate}]  (1) -- (0); 
	\draw[postaction={decorate}] (2) -- (1);
	\draw[postaction={decorate}] (3) -- (0);
	\draw[postaction={decorate}] (3) -- (1);
	\draw[postaction={decorate}] (3) -- (2);
	\draw[postaction={decorate}] (3) -- (5);
	\draw[postaction={decorate}] (4) -- (0);
	\draw[postaction={decorate}] (4) -- (1);
	\draw[postaction={decorate}] (4) -- (2);
	\draw[postaction={decorate}] (4) -- (3);
	\draw[postaction={decorate}] (5) -- (0);
	\draw[postaction={decorate}] (5) -- (1);
	\draw[postaction={decorate}] (5) -- (2);
	\draw[postaction={decorate}] (5) -- (4);

\end{tikzpicture} \hspace{10pt}
\end{center}

\label{fig:tourn7}
\end{figure}
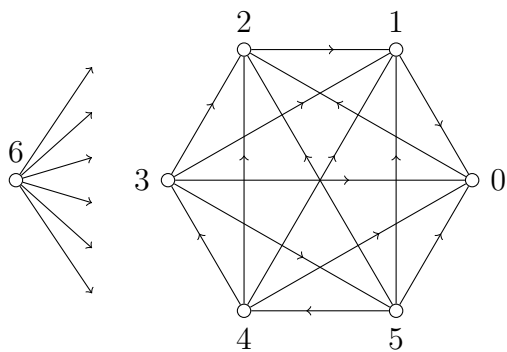

\begin{cor}
We have $\frac{1}{2^6} < \frac{35}{2^{11}} \le \pi(\cH(7))$.
\end{cor}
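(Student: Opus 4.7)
The plan is to apply Proposition \ref{p:lb} directly to the special oriented two-graph $g$ on $7$ vertices whose existence is established in the paragraph preceding the corollary via exhaustive SAGE enumeration. All the combinatorial groundwork (identification of a representative in Figure \ref{fig:tourn7}, enumeration of the switching class, and tabulation of automorphism group orders) is already in place, so the remaining work is essentially arithmetic: extract $|\Aut(g)|$ and substitute into the bound.

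First, I would verify $|\Aut(g)| = 9$ using Lemma \ref{l:cosets2}. The $16$ isomorphism class representatives of tournaments in the switching class $C$ have automorphism group orders $1,1,1,1,3,3,3,3,3,3,3,3,9,9,9,9$, so
$$\sum_{i=1}^{16} \frac{1}{|\Aut(T_i)|} \;=\; 4 + \frac{8}{3} + \frac{4}{9} \;=\; \frac{36+24+4}{9} \;=\; \frac{64}{9}.$$
Since $|V(g)| = 7$, Lemma \ref{l:cosets2} rearranges to
$$|\Aut(C)| \;=\; \frac{2^{6}}{64/9} \;=\; 9,$$
which is all that is required for the Tur\'an bound; the finer statement $\Aut(g) \cong C_3 \times C_3$ (as opposed to $C_9$) is not needed here.

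Next I would substitute $r = 7$ and $|\Aut(g)| = 9$ into Proposition \ref{p:lb}:
$$\pi(\cH(7)) \;\ge\; \frac{7!}{|\Aut(g)|\cdot 2^{\binom{6}{2}}} \;=\; \frac{5040}{9 \cdot 2^{15}} \;=\; \frac{560}{2^{15}} \;=\; \frac{35}{2^{11}}.$$
The strict inequality $\frac{1}{2^{6}} < \frac{35}{2^{11}}$ is immediate from $\frac{1}{2^{6}} = \frac{32}{2^{11}}$, confirming that this estimate improves on the generic lower bound in Proposition \ref{p:decaenbounds}.

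The ``hard part'' lies entirely outside this proof: it is the computer search that isolates the special oriented two-graph on $7$ vertices in the first place. Lemma \ref{l:counting-copies} is the tool that makes that search feasible, reducing the special property to a local condition checkable on a single tournament per switching class; once $g$ is in hand and its switching class analysed, the corollary reduces to the short calculation above.
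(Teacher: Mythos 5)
Your proposal is correct and follows exactly the paper's own argument: apply Lemma \ref{l:cosets2} to the tabulated automorphism orders to get $|\Aut(C)| = 9$, then substitute into Proposition \ref{p:lb}. The arithmetic checks out ($4 + \tfrac{8}{3} + \tfrac{4}{9} = \tfrac{64}{9}$ and $\tfrac{7!}{9\cdot 2^{15}} = \tfrac{35}{2^{11}} > \tfrac{32}{2^{11}} = \tfrac{1}{2^6}$), so nothing further is needed.
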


\begin{proof}
By Lemma~\ref{l:cosets2}
\[
\frac{2^6}{|\Aut(C)|} = 4\cdot \frac{1}{1} + 8 \cdot \frac{1}{3} + 4 \cdot \frac{1}{9} = \frac{64}{9}.
\]
Thus, $|\Aut(C)| = 2^6 \cdot \frac{9}{64} = 9$ and by Proposition~\ref{p:lb},
\[
\pi(\cH(7)) \geq \frac{7!}{9 \cdot 2^{15}} = \frac{35}{2^{11}}.
\]
\end{proof}



For $r = 8$, the best lower bound for $\pi(\cH(8))$ using Proposition~\ref{p:lb} would be provided by a special oriented two-graph with a trivial automorphism group. Using SAGE it was verified that there are $40$ switching classes with trivial automorphism group of which $9$ are special. A tournament representing such a class is provided in Figure \ref{fig:tourn8}.  This gives the following lower bound.

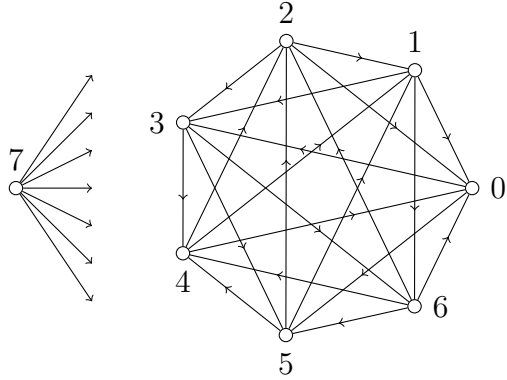
\begin{figure}[htb]
\caption{A special tournament on 8 vertices}
\begin{center}
\begin{tikzpicture}
	[decoration={markings, mark=at position 0.6 with {\arrow{>}}}] 
	\tikzstyle{vertex}=[circle, draw=black,  minimum size=5pt,inner sep=0pt]
		
	\foreach \x/\y in {0/right, 1/above, 2/above, 3/left, 4/below, 5/below, 6/right}
	{
		\node[vertex, label=\y:{$\x$}] at ({51.4*\x}:2cm) (\x) {};
	}
	
	\node[vertex, label=above:{$7$}] at (-4, 0) (7) {};
	\foreach \x in {0, 1, 2, 3, 4, 5, 6}
	{
	\draw[->] (7) -- ++(1, {1.5-\x/2});
	}
	
	\draw[postaction={decorate}] (0)--(3);
	\draw[postaction={decorate}]  (0) -- (5);
	\draw[postaction={decorate}]  (1) -- (0);
	\draw[postaction={decorate}]  (1) -- (3);
	\draw[postaction={decorate}]  (1) -- (6);
	\draw[postaction={decorate}]  (2) -- (0);
	\draw[postaction={decorate}]  (2) -- (1);
	\draw[postaction={decorate}]  (2) -- (3);
	\draw[postaction={decorate}]  (3) -- (4);
	\draw[postaction={decorate}]  (3) -- (5);
	\draw[postaction={decorate}]  (3) -- (6);
	\draw[postaction={decorate}]  (4) -- (0);
	\draw[postaction={decorate}]  (4) -- (1);
	\draw[postaction={decorate}]  (4) -- (2);
	\draw[postaction={decorate}]  (5) -- (1);
	\draw[postaction={decorate}]  (5) -- (2);
	\draw[postaction={decorate}]  (5) -- (4);
	\draw[postaction={decorate}]  (6) -- (0);
	\draw[postaction={decorate}]  (6) -- (2);
	\draw[postaction={decorate}]  (6) -- (4);
	\draw[postaction={decorate}]  (6) -- (5);
\end{tikzpicture} \hspace{10pt}
\end{center}

\label{fig:tourn8}
\end{figure}

\begin{cor}
We have
$\frac{1}{2^7} < \frac{315}{2^{14}} \le \pi(\cH(8)).$
 \end{cor}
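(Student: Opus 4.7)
The plan is to apply Proposition \ref{p:lb} with $r=8$ to the switching class $C$ of the tournament displayed in Figure \ref{fig:tourn8}. The claim reduces to two computationally verifiable facts about $C$: (i) the associated oriented two-graph $g$ is special, and (ii) $\Aut(g) = \Aut(C) = 1$.

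For (i), by Lemma \ref{l:counting-copies}, speciality is equivalent to the assertion that for each tournament $T \in C$, at most one vertex $v \in V(T)$ satisfies $T^+ - v \in C$. Since $|C| = 2^7 = 128$, this is a finite check one can carry out on a computer by enumerating the switching class, forming each $T^+$, and counting the vertex deletions that land back in $C$. For (ii), one enumerates the isomorphism class representatives $T_1,\ldots,T_k$ of $C$ together with $|\Aut(T_i)|$. By Lemma \ref{l:cosets2},
\[
|\Aut(C)| = \frac{2^{|V|-1}}{\sum_{i=1}^k |\Aut(T_i)|^{-1}} = \frac{128}{\sum_{i=1}^k |\Aut(T_i)|^{-1}},
\]
and triviality of $\Aut(C)$ is confirmed precisely when this sum equals $128$, i.e.\ when all $128$ tournaments in $C$ are pairwise non-isomorphic.

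Granting (i) and (ii), Proposition \ref{p:lb} yields
\[
\pi(\cH(8)) \;\ge\; \frac{8!}{|\Aut(g)| \cdot 2^{\binom{7}{2}}} \;=\; \frac{40320}{2^{21}} \;=\; \frac{2^{7} \cdot 315}{2^{21}} \;=\; \frac{315}{2^{14}}.
\]
The strict inequality $\frac{1}{2^{7}} < \frac{315}{2^{14}}$ then follows from $\frac{1}{2^{7}} = \frac{128}{2^{14}}$ together with $128 < 315$.

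The only non-trivial step is the computational verification of (i) and (ii) for the specific tournament in Figure \ref{fig:tourn8}; this was carried out using SAGE's built-in iterator over tournaments (via Nauty), from which one extracts the switching class, the list of automorphism group orders of its elements, and the incidence data needed to apply Lemma \ref{l:counting-copies}. The existence of the required example among the $40$ switching classes of $8$-vertex tournaments with trivial automorphism group, $9$ of which turn out to be special, was already noted in the preamble to Figure \ref{fig:tourn8}, so no further argument is needed beyond pointing to the figure and invoking Proposition \ref{p:lb}.
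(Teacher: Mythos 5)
Your proposal is correct and follows essentially the same route as the paper: the paper likewise cites the SAGE verification that the switching class of the Figure~\ref{fig:tourn8} tournament is special with trivial automorphism group (via Lemma~\ref{l:cosets2}), and then applies Proposition~\ref{p:lb} to get $8!/(1\cdot 2^{21})=315/2^{14}$. Your additional detail on how the computations reduce to Lemma~\ref{l:counting-copies} and to checking that all $128$ tournaments in the class are pairwise non-isomorphic is consistent with what the paper does, just spelled out more explicitly.
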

 
 \begin{proof}
 Using Lemma~\ref{l:cosets2}, $|\Aut(C)| = 1$ and so by Proposition~\ref{p:lb},
 \[
 \pi(\cH(8)) \geq \frac{8!}{1 \cdot 2^{21}} = \frac{315}{2^{14}}.
 \]
 \end{proof}

\section{A uniqueness result when $r=4$}\label{s:uniq}
Suppose that $f$ is an an admissible function on an abelian group $A$ of odd order and define:
$$\cH_f=\{X \subseteq V(T_f^+) \mid g_{T_f^+}|_X = g_3\}.$$ Note, in particular, that $\cH_\chi=\cH_q$ is the $4$-graph in Theorem \ref{t:r=4} above. In this section, we prove the following result:

\begin{thm}\label{t:uniq}
Let $p$ be prime with $p \equiv 3 \pmod 4$ and suppose that $f$ is an admissible function on $\mathbb{F}_p$. If $H_f$ is $\cH(4)$-free and attains the upper bound on edges in Proposition~\ref{p:decaen},  then $\cH_f=\cH_p$.
\end{thm}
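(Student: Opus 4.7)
The plan is to translate the extremality hypothesis into a Fourier-analytic condition on $f$, and then use a Galois rigidity argument inside $\mathbb{Z}[\zeta_p]$ to force $f = \pm\chi$.

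First, I would determine exactly when a $4$-set $X = \{\infty, a, b, v\}$ lies in $\cH_f$. A direct computation of the four ordered-triple values of $g_{T_f^+}$ on $X$, combined with the fact that the Paley orbit of $g_3$ on four labelled vertices is the size-$2$ $S_4$-orbit $\{(1,-1,1,-1),\,(-1,1,-1,1)\}$ inside the eight sign patterns with product $1$ (which is forced by $|\Aut(g_3)| = |A_4|$), shows that $X \in \cH_f$ precisely when
\[
f(b-a) \;=\; f(v-b) \;=\; -f(v-a).
\]
By Proposition~\ref{p:decaen} the extremality hypothesis forces every $3$-set to lie in exactly $(p+1)/4$ members of $\cH_f$. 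Specialising to a $3$-set $\{\infty, a, b\}$ and writing $t := b-a$, I would express the indicator of the condition above via
\[
\mathbf{1}[f(u) = -f(t)]\,\mathbf{1}[f(u-t) = f(t)] \;=\; \tfrac{1}{4}\bigl(1 - f(u)f(t)\bigr)\bigl(1 + f(u-t)f(t)\bigr),
\]
valid for every $u \in \mathbb{F}_p$ (the boundary values $u \in \{0,t\}$ make both sides vanish). Expansion together with $\sum_u f(u) = 0$ and $f(t)^2 = 1$ then collapses the extremality count into the autocorrelation identity
\[
R(t) \;:=\; \sum_{u \in \mathbb{F}_p} f(u)\,f(u-t) \;=\; -1 \quad \text{for every } t \in \mathbb{F}_p^\times.
\]

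Next I would take the discrete Fourier transform on $\mathbb{F}_p$. A standard substitution gives $\widehat{R}(a) = \hat{f}(a)\hat{f}(-a) = |\hat{f}(a)|^2$, while $R = p\,\delta_0 - 1$ forces $\widehat{R}(a) = p$ for every $a \neq 0$. Hence $|\hat{f}(a)|^2 = p$ on $\mathbb{F}_p^\times$. Because $f$ is odd and real-valued, $\hat{f}(a)$ is purely imaginary, so $\hat{f}(a)^2 = -p$. The value $\hat{f}(a)$ lies in $\mathbb{Z}[\zeta_p]$; since $p \equiv 3 \pmod 4$, the unique quadratic subfield of $\mathbb{Q}(\zeta_p)$ is $\mathbb{Q}(\sqrt{-p})$, realised by the quadratic Gauss sum $\tau := \sum_{c \in \mathbb{F}_p^\times} \chi(c)\,\zeta_p^c$, which satisfies $\tau^2 = -p$. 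The only roots of $X^2 + p$ in $\mathbb{Z}[\zeta_p]$ are therefore $\pm\tau$, so $\hat{f}(a) = s(a)\,\tau$ for some function $s:\mathbb{F}_p^\times \to \{\pm 1\}$.

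Galois rigidity now finishes the argument. For each $b \in \mathbb{F}_p^\times$, the automorphism $\sigma_b \in \Gal(\mathbb{Q}(\zeta_p)/\mathbb{Q})$ defined by $\zeta_p \mapsto \zeta_p^b$ satisfies $\sigma_b(\hat{f}(a)) = \hat{f}(ab)$, and a short computation gives $\sigma_b(\tau) = \chi(b)\,\tau$. Applying $\sigma_b$ to $\hat{f}(a) = s(a)\tau$ yields the cocycle identity $s(ab) = s(a)\chi(b)$; setting $a = 1$ forces $s(b) = s(1)\chi(b)$ for every $b$. Therefore $\hat{f} = \pm\hat{\chi}$, so $f = \pm\chi$ by Fourier inversion. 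Since $g_{T_{-\chi}^+} = -g_{T_\chi^+}$ (reversing every edge inside $\mathbb{F}_p$ flips the sign of each triple value of $g_{T_\chi^+}$) and the Paley orbit on four vertices is closed under global negation, we conclude $\cH_{-\chi} = \cH_\chi = \cH_p$, as required.

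The principal delicacy I anticipate is the algebraic step pinning $\hat{f}(a)$ down to $\pm\tau$: it requires $p$ to be prime (so that $\mathbb{Q}(\sqrt{-p})$ embeds uniquely inside $\mathbb{Q}(\zeta_p)$) together with the classical Gauss-sum evaluation $\tau^2 = -p$. After this, the Galois step is clean because $\Gal(\mathbb{Q}(\zeta_p)/\mathbb{Q}) \cong \mathbb{F}_p^\times$ acts simply transitively on $\mathbb{F}_p^\times$ by multiplication, allowing the single sign choice $s(1)$ to propagate to every $a \in \mathbb{F}_p^\times$.
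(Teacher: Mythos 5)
Your proposal is correct and follows essentially the same route as the paper: your autocorrelation identity $R(t)=-1$ is exactly the paper's Lemma~\ref{l:admissext} (that $f\star f=\chi\star\chi$), and your Gauss-sum/Galois rigidity step reproduces the paper's second proof of Lemma~\ref{l:conv}, merely phrased via the cocycle $s(ab)=s(a)\chi(b)$ on all Fourier coefficients rather than fixing a single character and deducing that $f$ is constant on squares. The concluding observation that $g_{T_{-\chi}^+}=-g_{T_\chi^+}$ and $-g_3\cong g_3$, giving $\cH_{-\chi}=\cH_\chi$, likewise matches the paper's final step.
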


We require some notions from Fourier analysis:

\begin{defn}
Let $A$ be an abelian group, let  $f,g: A \rightarrow \mathbb{C}$ be two functions on $A$ and let $\rho: A \rightarrow \mathbb{C}^\times$ be a representation.
\begin{enumerate}
\item  the \textit{convolution} of $f$ and $g$ at $b \in A$ is $(f \star g)(b) = \displaystyle\sum_{a \in A} f(b-a)g(a).$
\item the \textit{Fourier transform} of $f$ at $\rho$ is $\displaystyle \sum_{a \in A} f(a)\overline{\rho(a)}$
\end{enumerate}
\end{defn}

It is well known that 
$$\widehat{f \star g}(\rho)=\widehat{f}(\rho)\widehat{g}(\rho) \mbox{ and } f(a)=\frac{1}{|A|} \sum_{\rho \in \widehat{A}} \widehat{f}(\rho) \rho(a), \mbox{ for all $a \in A$},$$
where $\widehat{A}$ denotes the set of all linear characters of $A$. In particular $f=0$ if $\widehat{f}(\rho)=0$ for some character $\rho$. 
\begin{lem}\label{l:conv}
Let $f: \mathbb{F}_p \rightarrow \{0,\pm1\}$ be any function with $f(x)=0$ if and only if $x=0$ for which $f \star f = \chi \star \chi$. Then $f = \pm \chi$. 
\end{lem}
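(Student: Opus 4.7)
The plan is to apply Fourier analysis to the identity $f \star f = \chi \star \chi$: since $\widehat{f \star f} = \widehat{f}^{\,2}$, we obtain $\widehat{f}(\rho)^2 = \widehat{\chi}(\rho)^2$ at every additive character $\rho$ of $\mathbb{F}_p$, i.e.\ $\widehat{h}(\rho)\widehat{k}(\rho)=0$ where $h := f - \chi$ and $k := f + \chi$. By the standard Gauss-sum identity $|\widehat{\chi}(\rho)|^2 = p$ for every nontrivial $\rho$, so at each such $\rho$ exactly one of $\widehat{h}(\rho), \widehat{k}(\rho)$ vanishes.

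Before this, I would extract antisymmetry of $f$ directly from the convolution equation. Evaluating the hypothesis at $0$ and using $p \equiv 3 \pmod 4$ (so $\chi(-1) = -1$) gives
\[
\sum_{a \in \mathbb{F}_p^\times} f(a) f(-a) \;=\; (\chi \star \chi)(0) \;=\; -(p-1).
\]
Each summand lies in $\{\pm 1\}$, so they all equal $-1$ and $f(-a) = -f(a)$. Consequently $h$ and $k$ are antisymmetric, vanish at $0$, and take values in $\{0, \pm 2\}$. Since $f(a), \chi(a) \in \{\pm 1\}$ at each nonzero $a$, the supports of $h$ and $k$ in $\mathbb{F}_p^\times$ are complementary, and
\[
|\mathrm{supp}(h)| + |\mathrm{supp}(k)| = p - 1.
\]
Antisymmetry also forces $\widehat{h}$ and $\widehat{k}$ to vanish at the trivial character, so combined with the previous paragraph $|\mathrm{supp}(\widehat{h})| + |\mathrm{supp}(\widehat{k})| = p - 1$ as well.

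The finishing stroke, and the step where I expect the work to lie, is Tao's uncertainty principle for $\mathbb{F}_p$ — a consequence of Chebotarev's theorem that every square submatrix of the $\mathbb{F}_p$-DFT matrix is nonsingular — which asserts that every nonzero $F : \mathbb{F}_p \to \mathbb{C}$ satisfies $|\mathrm{supp}(F)| + |\mathrm{supp}(\widehat{F})| \geq p + 1$. If both $h$ and $k$ were nonzero, adding the two inequalities would give $2(p - 1) \geq 2(p + 1)$, a contradiction; hence $h = 0$ or $k = 0$, and $f = \pm \chi$. Locating this number-theoretic input (which the acknowledgements to the paper appear to flag) is the main obstacle; the rest of the argument is designed precisely to produce the matched support-count identity on which the uncertainty principle can bite. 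In particular the argument collapses for general abelian $A$ of odd order, consistent with the lemma being stated for prime $p$.
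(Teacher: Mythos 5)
Your proof is correct and is essentially the paper's first proof of Lemma \ref{l:conv}: the same decomposition $f\mp\chi$, the observation that the supports are disjoint both in physical space and (via $\widehat{F}\widehat{G}=0$) in frequency space, and Tao's uncertainty principle to force one factor to vanish. Your extraction of antisymmetry from $(f\star f)(0)$ to get exact support counts is a minor refinement of the paper's pigeonhole bounds $|\supp(F)|+|\supp(G)|\le p$ and $|\supp(\widehat F)|+|\supp(\widehat G)|\le p$, but the argument is the same.
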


 There are at least two ways to prove this:

\begin{proof}[First proof]
This goes via the \textit{uncertainty principle}. For a function $g: \mathbb{F}_p \rightarrow \mathbb{C}$, write $$\supp(g):=\{x \in \mathbb{F}_p \mid g(x) \neq 0\} \mbox{ and } \supp(\widehat{g}):=\{\rho \in \widehat{\mathbb{F}_p} \mid \widehat{g}(\rho) \neq 0\} $$ for the \textit{support} of $g$ and $\hat{g}$ respectively. Then, by \cite[Theorem 1.1]{T03}, \begin{equation}\label{e:tao}
|\supp(g)|+|\supp(\widehat{g})| \ge p+1. 
\end{equation} 

Set $F:=f-\chi$ and $G:=f+\chi$ and suppose that $F$ and $G$ are both non-zero. By (\ref{e:tao}), $$|\supp(F)|+|\supp(\widehat{F})|+|\supp(G)|+|\supp(\widehat{G})| > 2p.$$

Now $FG(x)=0$ since $f^2(x)=\chi^2(x)=1$ for all non-zero $x \in \mathbb{F}_p$ so by the pigeonhole principle, $$|\supp(F)|+|\supp(G)| \le p.$$ Also, $F \star G=0$ by the commutativity of $\star$ and hypothesis so $0=\widehat{F \star G}=\widehat{F}\widehat{G}$ and $$|\supp(\widehat{F})|+|\supp(\widehat{G})| \le p.$$ This delivers the required contradiction.     
\end{proof}

We are grateful to Andy Booker for communicating the following alternative argument which uses Gauss sums.

\begin{proof}[Second proof]
Let $\zeta_p$ be a $p$'th root of unity and recall the isomorphism $$\mathbb{F}_p^\times \rightarrow \Gal(\mathbb{Q}(\zeta_p)/\mathbb{Q}), \mbox{ where } a \mapsto \sigma_a: \zeta_p \mapsto \zeta_p^a.$$ Let $a \in \mathbb{F}_p^\times$ be such that $\chi(a)=1$ and observe that $\sigma_a$ leaves invariant the Gauss sum $$\widehat{\chi}(\rho)=\sum_{x \in \mathbb{F}_p^\times} \chi(x)\zeta_p^x=\sqrt{\chi(-1)p}, \mbox{ where } \rho(x)=\zeta_p^x.$$  

Now $\widehat{f}(\rho)^2=\widehat{f\star f}(\rho) =\widehat{\chi\star \chi}(\rho)= \widehat{\chi}(\rho)^2$ by hypothesis, so $\widehat{f}(\rho)=\pm \widehat{\chi}(\rho)$ and $\sigma_a$ must also leave $\widehat{f}(\rho)$ invariant. In particular, $$\widehat{f}(\rho)=\sum_{x \in \mathbb{F}_p^\times} f(x)\zeta_p^x= \sum_{x \in \mathbb{F}_p^\times} f(x)\zeta_p^{ax}=\sum_{x \in \mathbb{F}_p^\times} f(xa^{-1})\zeta_p^x$$ and we must have $f(x)=f(xa^{-1})$ for all $x \in \mathbb{F}_p^\times$. In particular when $x=1$, $f(1)=f(a^{-1})$ so $f(a)$ is constant $a \in \mathbb{F}_p^\times$ with $\chi(a)=1$. Equivalently,  $f=\pm \chi$, as required.
\end{proof}

\begin{lem}\label{l:admissext}
Suppose that $f$ is an admissible function on an abelian group $A$ with $|A| \equiv 3 \pmod 4$. If $\cH_f$ is $\cH(4)$-free and attains the upper bound on edges in Proposition~\ref{p:decaen} for $\cH(4)$, then $$(f \star f)(x)= \begin{cases} 1 & \mbox{ if $x \neq 0$ } \\ |A|-1 & \mbox{ if $x = 0$. } \end{cases}$$
\end{lem}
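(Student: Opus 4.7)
The plan is to translate the extremality hypothesis into a counting identity about 3-cycles in $T_f$, and then unfold it into the desired convolution identity for $f$.

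First I would characterize which 4-subsets containing $\infty$ lie in $\cH_f$. Given $a,b,c \in A$, the vertex $\infty$ has in-degree $3$ in $T_f^+|_{\{a,b,c,\infty\}}$, so the score sequence of this sub-tournament is $(0, s_a+1, s_b+1, s_c+1)$, where $s_x$ is the out-degree of $x$ in $T_f|_{\{a,b,c\}}$. If $T_f|_{\{a,b,c\}}$ is transitive the 4-sub-tournament is transitive and so lies in the \emph{other} switching class on 4 vertices; if it is a 3-cycle, the 4-scores are $(0,2,2,2)$, matching $T_3^+$, and the restriction of $g_{T_f^+}$ is isomorphic to $g_3$. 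Hence $\{a,b,c,\infty\} \in \cH_f$ if and only if $T_f|_{\{a,b,c\}}$ is a 3-cycle. By Proposition~\ref{p:decaen}, the extremality hypothesis forces every 3-set to be contained in exactly $(|A|+1)/4$ hyperedges. Specializing to $\{a,b,\infty\}$ with $a \to b$ in $T_f$ (so $f(u) = 1$ for $u := b-a$), we conclude that every directed edge of $T_f$ lies in exactly $(|A|+1)/4$ 3-cycles of $T_f$.

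Next I would unfold this edge-count as a convolution. A 3-cycle $a \to b \to c \to a$ requires $f(c-b) = 1$ and $f(a-c) = 1$; setting $v = c-b$, the number of such $c$ equals $(\mathbf{1}_G \star \mathbf{1}_G)(-u)$, where $\mathbf{1}_G$ is the indicator of $G := f^{-1}(1)$ (the degenerate values $v = 0, -u$ correspond to $c = b, a$ and contribute nothing, as $\mathbf{1}_G(0) = 0$ and $-u \notin G$ when $u \in G$). Writing $\mathbf{1}_G = \tfrac12(\mathbf{1} + f - \delta_0)$, expanding the convolution, and using $\mathbf{1}\star f = 0$ (admissibility), $\mathbf{1}\star\delta_0 = \mathbf{1}$, $f\star\delta_0 = f$, $\delta_0\star\delta_0 = \delta_0$ yields
\[
(\mathbf{1}_G \star \mathbf{1}_G)(x) = \tfrac{1}{4}\bigl(|A| - 2 + (f \star f)(x) - 2f(x) + \delta_0(x)\bigr).
\]
Evaluating at $x = -u$, using $f(-u) = -1$ and $\delta_0(-u) = 0$, this simplifies to $(|A| + (f \star f)(-u))/4$. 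Setting this equal to $(|A|+1)/4$ gives $(f\star f)(-u) = 1$ for every $u \in G$.

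To finish, I would note that $(f\star f)$ is symmetric: the substitution $a \mapsto -a$ combined with $f(-a) = -f(a)$ shows $(f\star f)(-x) = (f\star f)(x)$. Hence $(f\star f)(u) = 1$ for every $u \in G$, and applying the identity with $u$ replaced by $-u$ extends this to all $u \in -G = f^{-1}(-1)$, giving $(f\star f)(x) = 1$ for every nonzero $x$. The value at $0$ is automatic from admissibility: $(f\star f)(0) = \sum_a f(-a)f(a) = -\sum_a f(a)^2 = -(|A|-1)$ (the sign here depends on the convention used in the statement). The main delicate point is the bookkeeping of the "corner" contributions $v = 0, -u$ together with the $\delta_0$-correction distinguishing $\mathbf{1}_G$ from $\tfrac{1+f}{2}$; once these are handled, the identity drops out of one clean expansion, and only the 3-sets containing $\infty$ need to be exploited (the 3-sets lying entirely in $A$ then become consistency checks rather than extra input).
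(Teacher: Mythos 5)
Your argument is correct and essentially the same as the paper's: both apply the design condition from Proposition~\ref{p:decaen} to a triple $\{a,b,\infty\}$ with $a\to b$, identify the hyperedges containing it with the cyclic triangles through that directed edge, and expand the resulting count into the convolution identity (your indicator-function bookkeeping $\mathbf{1}_G=\tfrac12(\mathbf{1}+f-\delta_0)$ is just a repackaging of the paper's direct expansion of $\tfrac14\sum_{a\neq 0,u}(1-f(a))(1-f(u-a))$). Your parenthetical about the value at $0$ is also right: with the paper's convention $(f\star f)(0)=\sum_a f(-a)f(a)=-(|A|-1)$, so the displayed value in the lemma should carry a minus sign, though this is harmless for the application since Lemma~\ref{l:conv} only uses $f\star f=\chi\star\chi$ away from the origin's common value.
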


\begin{proof}
By the hypothesis on $\cH_f$, any three vertices are contained in exactly $(|A|+1)/4$ hyperedges by Proposition \ref{p:decaen}. In particular, for any vertex $u$ with $f(u)=1$, we must have $$\frac{1}{4}\sum_{a \in A \backslash \{0,u\}} (1-f(a))(1-f(u-a)) = \frac{|A|+1}{4}.$$ Expanding, and using that $f$ is admissible we have $$\begin{array}{rcl} |A|+1&=& \displaystyle\sum_{a \in A \backslash \{0,u\}} (1-f(a)-f(u-a)+f(a)f(u-a)) \\ &=& (|A|-2)+2f(u)+(f \star f)(u), \end{array}$$ from which we deduce that $(f \star f)(u)=1$. Since $f$ is an odd function, we also have $(f \star f)(-u)=1$, and the lemma follows.  
\end{proof}

We now prove Theorem \ref{t:uniq}.

\begin{proof}[Proof of Theorem \ref{t:uniq}]
By the hypothesis on $\cH_f$,   $f \star f = \chi \star \chi$ by Lemma \ref{l:admissext} so  $f = \pm \chi$ by Lemma \ref{l:conv}. Hence $\cH_p \cong \cH_\chi$ (since $\cH_\chi \cong \cH_{-\chi}$  by Lemma \ref{l:chiequalsq}).
\end{proof}


\section{$3$-tournaments and switching}

In \cite{LT10}, Leader and Tan introduce the notion of a $d$-tournament for $d \ge 2$. A $2$-tournament is just a tournament, and for $d=3$ we have the following definition:

\begin{defn}
Let $V$ be a vertex set. A $3$-tournament is any alternating function $g : \{(x,y,z) \in V \times V \times V \mid x,y,z$ distinct $\} \rightarrow \{\pm 1\}$.
\end{defn}

Thus an oriented two-graph  is just a particular example of a $3$-tournament. Recall that a \textit{two-graph} is a $3$-graph $\cX$ on $V$ for which zero, two or four $3$-subsets of any $4$-subset of $V$ lie in $\cX$. 
The number of two-graphs on $V$ is easily seen to be $2^{{|V|-1}\choose{2}}$.

\begin{defn}
Let $V$ be a vertex set.
\begin{enumerate}
\item If $g$ is a $3$-tournament on $V$ and $\cX$ is a two-graph on $V$ then $g$ switched with respect to $\cX$, denoted $g^\cX$, is the $3$-tournament obtained via
$$g^\cX(x,y,z) = \begin{cases} g(x,y,z) & \mbox{ if $\{x,y,z\} \in \cX$ } \\ -g(x,y,z) & \mbox{ otherwise.} \end{cases}$$
\item Two $3$-tournaments $g_1$ and $g_2$ on $V$ are \textit{switching equivalent} if there exists a two-graph $\cX$ on $V$ with $g_1^\cX = g_2$.
\item A \textit{switching class} $D$ of $3$-tournaments on $V$ is a set of $3$-tournaments which is closed under the operation of switching.
\end{enumerate} 
\end{defn}
If $g$ is a $3$-tournament, define $$\cH(g):=\{\{x,y,z,w\} \mid g(x,y,z)g(y,x,w)g(z,y,w)g(x,z,w)=+1\}.$$
To see this is well-defined, observe that (since $g$ is alternating) the condition on $g$ remains true after applying any permutation of $\{x,y,z,w\}$. We note the following:

\begin{lem}
If $g_1$ and $g_2$ are switching equivalent $3$-tournaments, then $\cH(g_1)=\cH(g_2)$.
\end{lem}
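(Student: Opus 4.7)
The plan is to show that switching leaves the product $g(x,y,z)g(y,x,w)g(z,y,w)g(x,z,w)$ invariant for every 4-subset $\{x,y,z,w\}$, from which the equality of hyperedge sets follows immediately.

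First I would note that it suffices to treat the case $g_2 = g_1^\cX$ for a single two-graph $\cX$, since this covers the definition of switching equivalence directly. Because $g$ is alternating, the value $g(x,y,z)$ for an ordered triple is $\pm 1$ with the sign determined by the orientation, while the switching operation $g \mapsto g^\cX$ multiplies $g$ on the triple $(x,y,z)$ by $\varepsilon_{\{x,y,z\}}$, where $\varepsilon_S = -1$ if $S \in \cX$ and $\varepsilon_S = +1$ otherwise. Crucially, this sign depends only on the underlying 3-subset, not on its ordering.

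The central observation is that the four triples $(x,y,z), (y,x,w), (z,y,w), (x,z,w)$ appearing in the defining product for $\cH(g)$ have underlying 3-subsets
\[
\{x,y,z\},\ \{x,y,w\},\ \{y,z,w\},\ \{x,z,w\},
\]
which are precisely the four $3$-element subsets of $\{x,y,z,w\}$. Therefore
\[
g_2(x,y,z)\,g_2(y,x,w)\,g_2(z,y,w)\,g_2(x,z,w) = \Bigl(\prod_{\substack{S \subseteq \{x,y,z,w\} \\ |S|=3}} \varepsilon_S\Bigr)\, g_1(x,y,z)\,g_1(y,x,w)\,g_1(z,y,w)\,g_1(x,z,w).
\]

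The final step invokes the defining property of a two-graph: the number of $3$-subsets of $\{x,y,z,w\}$ that lie in $\cX$ is $0$, $2$, or $4$, hence always even. Consequently the product of the $\varepsilon_S$ equals $+1$, so the product on the left is $+1$ if and only if the product on the right is, i.e.\ $\{x,y,z,w\} \in \cH(g_2)$ if and only if $\{x,y,z,w\} \in \cH(g_1)$. The only potential obstacle is verifying that the four triples of the product really correspond to the four $3$-subsets (one must read off the underlying sets from the somewhat asymmetric-looking ordering in the definition), but this is a routine check and the rest of the argument is immediate.
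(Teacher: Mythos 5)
Your argument is correct and is exactly the computation the paper leaves implicit (the lemma is stated there without proof): switching multiplies each of the four factors by a sign depending only on the underlying $3$-subset, these four triples exhaust the $3$-subsets of $\{x,y,z,w\}$, and the two-graph condition forces an even number of sign changes, so the product defining membership in $\cH(g)$ is unchanged. The only quibble is that your convention for $\varepsilon_S$ is the reverse of the paper's definition of $g^{\cX}$ (which agrees with $g$ on triples \emph{in} $\cX$ and negates the others), but this is harmless since the number of $3$-subsets of a $4$-set lying outside $\cX$ is likewise even.
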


Note also that a $3$-tournament $g$ is an oriented two-graph if and only if $\cH(g)$ is complete. 

\begin{defn}
If $g$ is a $3$-tournament with switching class $\cC$ define $\Aut(\cC)=\Aut(\cH(g)).$
\end{defn}

We have the following:

\begin{lem}\label{l:3tournauto}
Let $\cC$ be a switching class of $3$-tournaments on $V$ and let $\{g_1,g_2,\ldots, g_k\}$ be a complete set of isomorphism class representatives of elements of $\cC$. There is a one to one correspondence $\Aut(\cC)/\Aut(g_1) \rightarrow \{g \in \cC \mid g \cong g_1\}$ and $$\sum_{i=1}^k \frac{1}{|\Aut(g_i)|} = \frac{2^{{|V|-1}\choose{2}}}{|\Aut(\cC)|}.$$
\end{lem}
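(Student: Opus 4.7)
The plan is to first establish a $3$-tournament analogue of Lemma \ref{l:onetoone}, namely that two $3$-tournaments on $V$ lie in the same switching class if and only if they share the same associated hypergraph $\cH$, and then to imitate the arguments of Lemmas \ref{l:cosets} and \ref{l:cosets2}. One direction of the correspondence was already noted in the preceding lemma. For the converse, given $3$-tournaments $g_1, g_2$ on $V$, I would define $s: \binom{V}{3} \to \{\pm 1\}$ by $s(U) = g_1(U)/g_2(U)$, which is well-defined on unordered triples since both $g_i$ are alternating. A direct expansion using the alternating property shows that for any $4$-set $S = \{x,y,z,w\}$,
$$\prod_{U \subset S,\, |U|=3} s(U) \;=\; \frac{g_1(x,y,z)\,g_1(y,x,w)\,g_1(z,y,w)\,g_1(x,z,w)}{g_2(x,y,z)\,g_2(y,x,w)\,g_2(z,y,w)\,g_2(x,z,w)}.$$
If $\cH(g_1) = \cH(g_2)$, numerator and denominator take the same sign on every $4$-set, so this product equals $+1$ throughout; hence $\cX := s^{-1}(+1)$ has the two-graph property (every $4$-set contains $0$, $2$, or $4$ triples of $\cX$) and $g_1^{\cX} = g_2$.

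As a by-product, the stabilizer of any $g$ under the switching action is trivial (only the two-graph $\binom{V}{3}$ fixes $g$), so every switching class has size exactly $2^{\binom{|V|-1}{2}}$.

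Next I would verify that $\Phi: \sigma\Aut(g_1) \mapsto \sigma(g_1)$ gives the claimed bijection $\Aut(\cC)/\Aut(g_1) \to \{g \in \cC \mid g \cong g_1\}$. Well-definedness on cosets and injectivity are the same cancellation argument as in Lemma \ref{l:cosets}. To see that $\sigma(g_1) \in \cC$ when $\sigma \in \Aut(\cC) = \Aut(\cH(g_1))$, observe that $\cH(\sigma g_1) = \sigma(\cH(g_1)) = \cH(g_1)$, so switching equivalence follows from the first step. For surjectivity, any $g \in \cC$ isomorphic to $g_1$ is $\sigma(g_1)$ for some $\sigma \in \Sym(V)$, and then $\sigma(\cH(g_1)) = \cH(g) = \cH(g_1)$ forces $\sigma \in \Aut(\cC)$.

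The sum formula now drops out by partitioning $\cC$ into its isomorphism classes $\cC_1, \dots, \cC_k$ and using $|\cC_i| = |\Aut(\cC)|/|\Aut(g_i)|$:
$$2^{\binom{|V|-1}{2}} \;=\; |\cC| \;=\; \sum_{i=1}^k |\cC_i| \;=\; |\Aut(\cC)| \sum_{i=1}^k \frac{1}{|\Aut(g_i)|}.$$
The main obstacle is the initial switching-class/$\cH$ correspondence; once the sign identity above is verified, the remainder of the argument closely parallels the tournament case.
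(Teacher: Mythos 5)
Your proposal is correct and follows the same route the paper intends: the paper's proof of this lemma is just a pointer to ``very similar arguments'' to Lemmas \ref{l:cosets} and \ref{l:cosets2}, and you have carried out exactly that adaptation, correctly supplying the one genuinely new ingredient needed --- the $3$-tournament analogue of Lemma \ref{l:onetoone} (switching classes correspond to the $4$-graphs $\cH(g)$, with the two-graphs acting freely, so $|\cC|=2^{\binom{|V|-1}{2}}$), which the paper leaves implicit. The sign-ratio argument for producing the two-graph $\cX$ and the use of $\cH(\sigma g_1)=\sigma(\cH(g_1))$ for surjectivity of $\Phi$ are both sound.
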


\begin{proof}
This follows from very similar arguments to those given in the proofs of Lemmas \ref{l:cosets} and \ref{l:cosets2} for a switching class of 2-tournaments. 
\end{proof}

\begin{exmp}\label{e:3tournex}
Up to isomorphism, there are six $3$-tournaments $g$ on $\{1,2,3,4,5\}$ for which $\cH(g)=\{\{1,2,3,4\},\{1,2,3,5\}\}$ with automorphism groups of orders $1,1,1,1,1,3$. We have $\Aut(\cH(g)) \cong \Sym(\{1,2,3\}) \times \Sym(\{4,5\})$ and $\displaystyle 5+\frac{1}{3} = \frac{2^6}{12}$, consistent with Lemma \ref{l:3tournauto}.
\end{exmp}


\begin{defn}
Let $V$ be a set. A permutation $\sigma \in \Sym(V)$ is \textit{level} if the powers of $2$ dividing its cycle lengths are all equal. If $G \le \Sym(V)$, write $L_G$ for the set of all level permutations in $G$.
\end{defn}

\begin{defn}
Let $V$ be a set and $\sigma \in \Sym(V)$.
\begin{enumerate}
\item The number of cycles in $\sigma$ is denoted $\orb(\sigma)$;
\item The number of orbits of $\langle \sigma \rangle$ on unordered pairs  is denoted $\orb_2(\sigma)$;
\item $\delta: \Sym(V) \rightarrow \{0,1\}$ is the function $$\delta(\sigma)=\begin{cases} 0 & \mbox{ if all cycles of $\sigma$ have even length } \\ 1 & \mbox{ otherwise.} \end{cases}$$
\end{enumerate}
\end{defn}

The following is a generalisation of \cite[Theorem 7.2]{BC2000}:

\begin{thm}\label{3tourniso}
The number of isomorphism classes of elements of a switching class $\cC$ of $3$-tournaments on $V$  is: $$\frac{1}{|\Aut(\cC)|} \sum_{\sigma \in L_{\Aut(\cC)}} 2^{\orb_2(\sigma)-\orb(\sigma)+\delta(\sigma)}.$$
\end{thm}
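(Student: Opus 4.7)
My plan is to apply the Cauchy--Frobenius (Burnside) lemma to the natural action of $\Aut(\cC)$ on $\cC$:
$$k \;=\; \frac{1}{|\Aut(\cC)|} \sum_{\sigma \in \Aut(\cC)} |\mathrm{Fix}_\cC(\sigma)|.$$
I would then show that $|\mathrm{Fix}_\cC(\sigma)| = 0$ if $\sigma \notin L_{\Aut(\cC)}$ and that $|\mathrm{Fix}_\cC(\sigma)| = 2^{\orb_2(\sigma)-\orb(\sigma)+\delta(\sigma)}$ if $\sigma \in L_{\Aut(\cC)}$; this immediately yields the stated formula. The overall structure parallels Babai--Cameron's argument for the analogous \cite[Theorem 7.2]{BC2000} counting 2-tournaments in a switching class, extended here from oriented two-graphs to 3-tournaments.

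For the vanishing claim, I would analyse $\sigma$'s action on unordered triples. If $g$ is $\sigma$-fixed and a triple $T$ has $\sigma$-orbit of length $m$, then $\sigma^m$ induces a permutation $\pi$ on $T$; the alternation of $g$ forces $g(T) = \mathrm{sgn}(\pi)\,g(T)$, so $\pi$ must be even (the identity or a $3$-cycle). A case analysis on the distribution of $T$'s elements among the cycles of $\sigma$ shows that an odd $\pi$ (necessarily a transposition) arises exactly when $\sigma$ has two cycles of distinct $2$-adic valuation: the offending triple can be built from an antipodal pair in the even cycle of larger $2$-valuation together with any element from a cycle of smaller $2$-valuation. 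Hence $F(\sigma) := \{g : \sigma\cdot g = g\}$ is non-empty iff $\sigma$ is level. For such $\sigma \in \Aut(\cC)$, fix any $g_0 \in \cC \cap F(\sigma)$; since $\sigma\cdot g_0^{\cX} = (\sigma g_0)^{\sigma\cdot\cX} = g_0^{\sigma\cdot\cX}$ and the switching action is free, $\mathrm{Fix}_\cC(\sigma)$ is in bijection with the set of $\sigma$-invariant two-graphs on $V$.

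To count these, I would use the classical identification of two-graphs with Seidel switching classes of graphs, writing $\mathcal{T} = \mathbb{F}_2^{\binom{V}{2}}/\partial(\mathbb{F}_2^V)$, where $\partial S(\{x,y\}) := S_x + S_y$ and $\ker\partial = \{0,\mathbf{1}\}$. The commuting identity $\partial\circ(1+\sigma) = (1+\sigma)\circ\partial$ reduces the count to the $\mathbb{F}_2$-linear calculation
$$|\mathcal{T}^{\langle\sigma\rangle}| \;=\; \frac{|\ker(1+\sigma)|\cdot|\im(1+\sigma)\cap\im\partial|}{|\im\partial|}.$$
The kernel is the space of $\sigma$-invariant graphs, of size $2^{\orb_2(\sigma)}$, while $|\im\partial| = 2^{|V|-1}$. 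For the intersection, one first shows $\im(1+\sigma)\cap\im\partial = \partial\bigl((1+\sigma)\mathbb{F}_2^V + \ker\partial\bigr)$, and then computes its size to be $2^{|V|-1-\orb(\sigma)+\delta(\sigma)}$; the key observation yielding the $\delta$-correction is that $\mathbf{1}\in(1+\sigma)\mathbb{F}_2^V$ precisely when every cycle of $\sigma$ is even, obtained by taking $S$ to alternate around each such cycle. Combining these factors gives the formula. The main obstacle will be the cycle-structure case analysis pinpointing the non-level ``bad'' triples in the vanishing step, together with the bookkeeping required to extract the delicate $\delta(\sigma)$ correction from the interplay between $1+\sigma$ and the coboundary $\partial$.
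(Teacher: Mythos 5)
Your proposal is correct and follows essentially the same route as the paper: Burnside's lemma applied to the action of $\Aut(\cC)$ on $\cC$, vanishing of the fixed-point count for non-level permutations via a triple on which a suitable power of $\sigma$ induces a transposition, and identification of the fixed points of a level $\sigma$ with the $\sigma$-invariant two-graphs. The only divergence is that you rederive the Mallows--Sloane count $2^{\orb_2(\sigma)-\orb(\sigma)+\delta(\sigma)}$ of $\sigma$-fixed two-graphs by an explicit $\mathbb{F}_2$-linear computation where the paper simply cites \cite{MS75}; note that both arguments pass silently over the existence of a $\sigma$-fixed element of $\cC$ for level $\sigma$ (your ``fix any $g_0\in\cC\cap F(\sigma)$''), so you are no less rigorous than the original on that point.
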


\begin{proof}
The number of $3$-tournaments fixed by a permutation $\sigma \in \Aut(\cC)$ is the number of two-graphs on $V$ that it fixes. If $\sigma$ is not level then there is $r \in \NN$ for which $\sigma^r$ contains a transposition $(x,y)$ and a fixed point $z$. Hence for any $3$-tournament $g$ $g\sigma \neq g$ since $g\sigma^r(x,y,z) =g(y,x,z) =-g(x,y,z) \neq g(x,y,z)$. By \cite{MS75}, $\sigma$ fixes $2^{\orb_2(\sigma)-\orb(\sigma)+\delta(\sigma)}$ two-graphs so the result follows from the Orbit Counting Lemma.
\end{proof}

\begin{exmp}
If $|\Aut(\cC)|=1$ then Theorem \ref{3tourniso} implies that the number of isomorphism classes of elements of $\cC$ is $$2^{\orb_2(1)-\orb(1)+\delta(1)}=2^{{|V| \choose 2}-|V|+1}=2^{{|V|-1} \choose 2}$$
which is the number of two-graphs on $V$, as predicted by Lemma \ref{l:3tournauto}.
\end{exmp}

\begin{exmp}
If $\cC$ is the switching class of the $3$-tournament in Example \ref{e:3tournex}) then $L_{\Aut(\cC)}=\langle (1,2,3) \rangle$ and using Theorem \ref{3tourniso} we readily calculate that there are $$\frac{1}{12}(2^{10-5+1}+2 \cdot 2^{4-3+1})=\frac{72}{12}=6$$ isomorphism classes of elements of $\cC$, as expected.
\end{exmp}




\begin{thebibliography}{99}

\bibitem{BC2000}
{\sc L. Babai and P. J. Cameron}, Automorphisms and Enumeration of Switching Classes of Tournaments \emph{Electronic J. Combinatorics \bf7} (2000), 1--25.

\bibitem{BCL21a}
{\sc J. Balogh, F. C. Clemen, and B. Lidick\'{y}} Solving Tur\'{a}n's Tetrahedron Problem for the $\ell_2$-Norm, Preprint, arXiv:	arXiv:2108.10408.




\bibitem{BCL21b}
{\sc J. Balogh, F. C. Clemen, and B. Lidick\'{y}} Hypergraph Tur\'{a}n Problems in $\ell_2$-Norm, Preprint, arXiv:2108.10406.





\bibitem{BBLZ20}
{\sc W. Belkouche, A. Boussa\"{i}ri, S. Lakhlifi, and M. Zaidi}, Matricial characterization of tournaments with maximum number of diamonds, \emph{Discrete Math. \bf343} (2020), 1--9.



\bibitem{BLM11}
{\sc B. Bollob\'{a}s, I. Leader,  and C. Malvenuto}, Daisies and other Tur\'{a}n problems, \emph{Combin. Probab. Comput.} \textbf{20} (2011), 743--747. 

\bibitem{BES73a}
{\sc W.~G.~Brown, P.~Erd\H{o}s, V.~T.~S\'{o}s}, On the existence of triangulated spheres in $3$-graphs, and related problems,
\emph{Period. Math. Hungar.} \textbf{3} (1973), 221--228.

\bibitem{BES73b}
{\sc W.~G.~Brown, P.~Erd\H{o}s, V.~T.~S\'{o}s},
Some extremal problems on r-graphs, \emph{New directions in the theory of graphs (Proc. Third Ann Arbor Conf., Univ. Michigan, Ann Arbor, Mich, 1971)}, pp. 53--63. Academic Press, New York, 1973.

\bibitem{BR72}
{\sc E. Brown and K. B. Reid}, Doubly regular tournaments are equivalent to skew Hadamard matrices, \emph{J. Combinatorial Theory Ser. A}, \textbf{12} (1972), 332--338.

\bibitem{C77}
{\sc P. J. Cameron}, Cohomological aspects of two-graphs \emph{Math. Z., \bf157} (1977), 101--119.

\bibitem{CMOT-R06}
{\sc P. J. Cameron, H. R. Maimani, G. R. Omidi, and B.~Tayfeh-Rezaie}, $3$-designs from $\PSL(2,q)$, \emph{Discrete Math. \bf306} (2006), 3063--3073.


\bibitem{FF84}
{\sc P. Frankl, Z. F\"{u}redi},  An exact result for $3$-graphs \emph{Discrete Math. \bf50} (1984), 323--328.



\bibitem{GW64}
{\sc D. Gorenstein and J. H. Walter}, The characterization of  finite groups with dihedral Sylow $2$-subgroups \emph{J. Algebra \bf2} (1964), 85--151.




\bibitem{GS17}
{\sc K. Gunderson and J. Semeraro}, Tournaments, $4$-uniform hypergraphs and an exact extremal result \emph{J. Combinatorial Theory, Series B \bf126} (2017), 114--136.


\bibitem{LT10}
{\sc I. Leader and T. S. Tan}, Directed Simplices In Higher Order Tournaments, \emph{Mathematika \bf56} (2010), 173--181.


\bibitem{MS75}
{\sc C. L. Mallows and N. J. A. Sloane}, Two-graphs, switching classes, and
Euler graphs are equal in number \emph{SIAM J. Appl. Math.  \bf28} (1975), 876--880.



\bibitem{M95}
{\sc G. E. Moorhouse}, Tournaments, Two-graphs and skew two-graphs in finite geometries, \emph{Linear Algebra Appl. \bf226} (1995), 529--551.


\bibitem{RRS17}
{\sc C. Reiher, V. R\"{o}dl,  and M.Schacht} On a generalisation of Mantel's theorem to uniformly dense hypergraphs, 
\emph{Int. Math. Res. Not. IMRN \bf2018}, 4899--4941. 

\bibitem{T03}
{\sc T. Tao}, An uncertainty principle for cyclic groups of prime order.
 Preprint. arXiv: 0308286.






\end{thebibliography}
\end{document}